\documentclass[12pt]{amsart}
\usepackage[margin=1in]{geometry}
\usepackage{amsmath,amssymb,amsthm,mathtools}
\usepackage{hyperref}

\newtheorem{theorem}{Theorem}[section]
\newtheorem{proposition}[theorem]{Proposition}
\newtheorem{corollary}[theorem]{Corollary}
\newtheorem{lemma}[theorem]{Lemma}
\theoremstyle{remark}
\newtheorem{remark}[theorem]{Remark}

\newcommand{\Z}{\mathbb Z}
\newcommand{\Q}{\mathbb Q}
\newcommand{\CP}{\mathbb {CP}}
\newcommand{\Int}{\operatorname{int}}

\title[Torus surgery and the Cartwright--Steger surface]
{Torus Surgery Constraints and Intermediate Covers of the Cartwright--Steger Surface}
\author{Anar Akhmedov}
\address{School of Mathematics, University of Minnesota, Minneapolis, MN 55455, USA}
\email{akhmedov@umn.edu}
\author{Sai-Kee Yeung}
\address{Department of Mathematics, Purdue University, West Lafayette, IN 47907, USA}
\email{yeungs@purdue.edu}
\subjclass[2020]{Primary 57R17; Secondary 57R57, 32Q15, 20F05}
\keywords{Cartwright--Steger surface, symplectic sum, torus surgery, fundamental group, complex ball quotient}
\date{}

\begin{document}
\begin{abstract}
We study torus surgery on the Cartwright--Steger surface and the degree-two cover construction proposed in \cite[Remark~1]{ASY}.  Every torus in this surface is integrally nullhomologous, and surgery on a fixed disjoint collection of such tori cannot decrease the first Betti number.  We then determine the nodal-curve subgroup and complete the fundamental-group calculation in the degree-two construction.  The resulting minimal symplectic manifold has signature zero and fundamental group $\Z/2$; for compatible symplectic gluings, its universal cover is an exotic symplectic $31\CP^2\#31\overline{\CP}^{\,2}$.  The construction on the original Cartwright--Steger surface gives a minimal symplectic manifold with signature $-1$ and fundamental group $(\Z/2)^2$.
\end{abstract}

\maketitle

\section{Introduction}

The Cartwright--Steger surface lies at a useful meeting point of complex
hyperbolic geometry and symplectic four-manifold topology.  As a compact ball
quotient it is strongly constrained by negative curvature, while its irregular
topology and the curves constructed in \cite{CKY} make it a natural building
block for surgery and symplectic sum.  This raises two related questions.  How
far can torus surgery change its topology, and what remains of its fundamental
group when one uses its nodal curve in a symplectic sum?

Let $X$ denote the Cartwright--Steger surface.  Its numerical invariants are
\[
e(X)=3,\qquad \sigma(X)=1,\qquad H_1(X;\Z)\cong\Z^2.
\]
The surface is a compact quotient of the complex two-ball \cite{CKY} and is therefore aspherical and admits a metric of strictly negative real sectional curvature.

The first question has a definitive answer for surgery on a fixed disjoint
torus link.  Negative curvature and asphericity imply that every torus in $X$
is integrally nullhomologous.  A relative homology calculation then shows that
surgery on such a link cannot decrease the first Betti number.  In particular,
no collection of direct torus surgeries on $X$ can produce a rational homology
$\CP^2$.

We also reconsider the construction proposed in Remark~1 on p.~46 of
Akhmedov--Sakall\i--Yeung \cite{ASY}; this is p.~18 of
arXiv:2102.05265v2.  That remark uses a degree-two unramified cover of the
Cartwright--Steger surface to propose a minimal symplectic four-manifold with
$\pi_1\cong\Z/2$ and $(\sigma,\chi_h)=(0,8)$.  The fourfold-cover
construction in \cite[Section~5.3, pp.~44--45]{ASY}, corresponding to
pp.~16--17 of arXiv:2102.05265v2, is different.  It produces a simply
connected symplectic manifold $M_{2,10}$ with
\[
(e,\sigma,\chi_h,c_1^2)=(38,2,10,82),
\]
which is an exotic smoothly irreducible copy of
\[
19\CP^2\#17\overline{\CP}^{\,2}.
\]
The intermediate double-cover construction with fundamental group $\Z/2$ and signature zero was only outlined in Remark~1, without the subgroup and surface-complement calculations supplied here.  The main point of the present paper is to complete those calculations.  The nodal curve supplies exactly the mod-two homology kernel, and a geometric dual created by blowing up makes the surface-sum quotient explicit.  It also carries out the construction on the original Cartwright--Steger surface, where it gives fundamental group $(\Z/2)^2$ and signature $-1$; this Klein four-group construction does not appear in \cite{ASY}.

The paper \cite{ASY} used complex surfaces on the Bogomolov--Miyaoka--Yau line, including the Cartwright--Steger surface and quotients of Hirzebruch line-arrangement surfaces, to construct symplectic four-manifolds with nonnegative signature.  Its Theorem~1 gives irreducible symplectic manifolds, together with infinitely many nonsymplectic manifolds, homeomorphic but not diffeomorphic to
\[
(2n-1)\CP^2\#(2n-1)\overline{\CP}^{\,2},\qquad n\geq 9,
\]
The families with positive signature are stated in \cite[Theorem~2]{ASY}.
The ball quotient obtained from a Hirzebruch line arrangement is constructed
in \cite[Section~4]{ASY}.  The first arXiv version was posted on February 10,
2021.

The construction considered here uses an intermediate double cover of $X$ and produces a minimal symplectic four-manifold with fundamental group $\Z/2$ and invariants
\[
(e,\sigma,\chi_h,c_1^2)=(32,0,8,64).
\]
An exact Todd--Coxeter computation identifies the nodal-curve subgroup with the mod-two kernel.  We smooth the positive node first and then blow up a regular point of the smoothed surface.  The exceptional sphere is a geometric dual, which gives the required meridian relation in the surface-sum calculation.
For every compatible symplectically admissible gluing, the universal double cover of the resulting manifold is an exotic symplectic
\[
31\CP^2\#31\overline{\CP}^{\,2}.
\]
The corresponding construction on $X$ itself produces a minimal symplectic manifold $Z_V$ with
\[
\pi_1(Z_V)\cong(\Z/2)^2,
\qquad
(e,\sigma,\chi_h,c_1^2)=(29,-1,7,55).
\]

Theorem~\ref{thm:atoroidal} shows that every continuous map $T^2\to X$ sends the integral fundamental class to zero.  Thus the Cartwright--Steger surface contains no homologically essential embedded torus.  Corollary~\ref{cor:noCP2} gives the resulting obstruction to torus surgery.

The organization of the proof is as follows.  Sections~2 and~3 establish the
torus-surgery obstruction.  Section~4 introduces the covering tower and the
symplectic sum.  Section~5 proves minimality and determines the $w_2$-type of
the universal cover.  Section~6 identifies the nodal-curve subgroup, analyzes
the positive complement, and computes the surface-sum quotient.  Section~7
applies the same argument on $X$ itself, and Section~8 records the exact
finite-index computations used in the subgroup calculation.

\section{Tori in negatively curved aspherical manifolds}

\begin{theorem}\label{thm:atoroidal}
Let $M$ be a closed aspherical Riemannian manifold with strictly negative sectional curvature.  For every continuous map $f:T^2\to M$,
\[
f_*[T^2]=0\in H_2(M;\Z).
\]
\end{theorem}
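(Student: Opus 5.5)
We factor $f$ through the group homomorphism it induces on fundamental groups and then use Preissmann's theorem. Choose a basepoint and let $\phi=f_*:\pi_1(T^2)\cong\Z^2\to\pi_1(M)=\Gamma$. Since $M$ is aspherical, it is a $K(\Gamma,1)$. The homotopy class of $f$ is therefore determined by $\phi$, up to free homotopy, and $f_*[T^2]\in H_2(M;\Z)=H_2(\Gamma;\Z)$ equals $B\phi_*[T^2]$. Here $B\phi:T^2=K(\Z^2,1)\to K(\Gamma,1)$ is the induced map. So it suffices to show that $\phi_*:H_2(\Z^2;\Z)\to H_2(\Gamma;\Z)$ is zero.

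Next we identify the image $A=\phi(\Z^2)$. It is an abelian subgroup of $\Gamma$. Because $M$ is closed with strictly negative sectional curvature, Preissmann's theorem applies: every nontrivial abelian subgroup of $\Gamma$ is infinite cyclic. We also need that $\Gamma$ is torsion-free, which holds because $M$ is a finite-dimensional $K(\Gamma,1)$ (or directly by Cartan's fixed point theorem on the universal cover). Hence $A$ is either trivial or isomorphic to $\Z$. The point requiring care is Preissmann's argument itself. Commuting deck transformations preserve each other's axes in the Hadamard manifold $\widetilde M$. Two commuting hyperbolic isometries then share an axis, and by the flat strip theorem under strictly negative curvature they generate a discrete group acting by translations on a single geodesic, which is therefore cyclic. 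We cite this as a standard result rather than reprove it.

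With this in hand, $\phi$ factors as $\Z^2\to A\hookrightarrow\Gamma$. On classifying spaces, $B\phi$ therefore factors up to homotopy through $K(A,1)$, which is a point or $S^1$. In either case $H_2(K(A,1);\Z)=0$, so $\phi_*[T^2]=0$, and hence $f_*[T^2]=0$.

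The only substantive input is Preissmann's theorem. The remaining steps are formal consequences of asphericity, namely the identification of $H_2(M)$ with group homology and the functoriality of the classifying space construction. Compactness is used in Preissmann's theorem to guarantee that every nontrivial element of $\Gamma$ acts as an axial (hyperbolic) isometry, rather than a parabolic one.
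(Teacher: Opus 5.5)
Your proposal is correct and follows the paper's proof essentially verbatim. Preissmann's theorem forces the image of $\pi_1(T^2)$ to be trivial or infinite cyclic, and asphericity then lets you factor $f$ up to homotopy through $K(A,1)$, which is a point or $S^1$ and has vanishing second homology. Your additional remarks on torsion-freeness and on why compactness makes every nontrivial deck transformation axial are accurate. They are not needed beyond citing the theorem, since the statement "every nontrivial abelian subgroup is infinite cyclic" already rules out torsion.
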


\begin{proof}
By Preissmann's theorem \cite{Preissmann}, every nontrivial abelian subgroup of $\pi_1(M)$ is infinite cyclic.  Hence the image of
\[
f_*:\pi_1(T^2)=\Z^2\longrightarrow\pi_1(M)
\]
is trivial or cyclic.  The induced homomorphism therefore factors through a homomorphism $\Z^2\to\Z$.  Since both $T^2$ and $M$ are Eilenberg--Mac Lane spaces, the map $f$ is homotopic to a composite
\[
T^2\longrightarrow S^1\longrightarrow M.
\]
The assertion follows from $H_2(S^1;\Z)=0$.
\end{proof}

\begin{corollary}\label{cor:CS-atoroidal}
Every smoothly embedded torus in the Cartwright--Steger surface is integrally nullhomologous.  In particular, the Cartwright--Steger surface contains no symplectically embedded torus.
\end{corollary}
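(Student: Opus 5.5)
The plan is to deduce both assertions of Corollary~\ref{cor:CS-atoroidal} from Theorem~\ref{thm:atoroidal}, applied with $M=X$.

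\textbf{Step 1: the hypotheses hold.} As recalled in the introduction, $X$ is a compact quotient of the complex two-ball \cite{CKY}. The Bergman metric on the ball has holomorphic sectional curvature $-1$, so its real sectional curvatures are pinched in $[-1,-1/4]$. Being invariant under the automorphism group, this metric descends to a closed Riemannian $4$-manifold of strictly negative sectional curvature. By Cartan--Hadamard, the universal cover of $X$ is the contractible ball, so $X$ is aspherical. Thus $X$ satisfies the hypotheses of Theorem~\ref{thm:atoroidal}.

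\textbf{Step 2: embedded tori are nullhomologous.} Let $T\subset X$ be a smoothly embedded torus, oriented, with inclusion $f:T^2\to X$. Theorem~\ref{thm:atoroidal} gives $[T]=f_*[T^2]=0$ in $H_2(X;\Z)$. An unoriented torus can be handled the same way, since the statement is insensitive to the choice of orientation. This proves the first claim. No embeddedness is actually needed here, since the theorem applies to any continuous map.

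\textbf{Step 3: no symplectic tori.} Take a symplectic form $\omega$ on $X$; the Kähler form of the ball quotient metric is one. Suppose $T$ is a symplectically embedded torus, oriented by $\omega|_T$. Then
\[
\int_T\omega>0,
\]
because $\omega|_T$ is a positive area form. On the other hand, $\int_T\omega=\langle[\omega],[T]\rangle$, and this vanishes because $[T]=0$ by Step~2. This contradiction proves the second claim. The argument uses only that $\omega$ is closed, so it applies to every symplectic form on $X$, not just the Kähler one.

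\textbf{Expected difficulty.} There is no real obstacle. The only point requiring care is Step~1, namely confirming that the locally symmetric metric has strictly negative curvature, not merely nonpositive curvature. This is where the complex hyperbolic pinching $-1\le K\le -1/4$ is essential: a locally symmetric metric of higher rank, such as one on a product of curves, would not suffice.
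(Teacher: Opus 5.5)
Your proposal is correct and follows the paper's argument: you apply Theorem~\ref{thm:atoroidal} to the inclusion of the torus, then derive a contradiction between $\int_T\omega>0$ and $\langle[\omega],[T]\rangle=0$. Your Step~1 makes explicit the negative-curvature and asphericity facts that the paper states in the introduction rather than inside this proof.
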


\begin{proof}
Let $i:T^2\hookrightarrow X$ be the inclusion of a smoothly embedded torus.
Applying Theorem~\ref{thm:atoroidal} to $i$ gives
\[
i_*[T^2]=[T^2]=0\in H_2(X;\Z),
\]
which proves the first assertion.  Suppose now that $T^2$ is symplectic, and
orient it by the restriction of the symplectic form $\omega$.  Then
\[
\int_{T^2}\omega>0.
\]
On the other hand, the integral of a closed form depends only on the real
homology class of the surface.  Since the integral class $[T^2]$ vanishes, its
image in $H_2(X;\mathbb R)$ also vanishes, and hence
\[
\int_{T^2}\omega=\langle[\omega],[T^2]\rangle=0,
\]
a contradiction.
\end{proof}

\section{Homology under surgery on nullhomologous tori}

Let $M$ be a closed connected oriented four-manifold and let $T_1,\ldots,T_n\subset M$ be pairwise disjoint oriented embedded tori satisfying
\[
[T_i]=0\in H_2(M;\Z).
\]
Put
\[
C=M\setminus\Int\nu(T_1\sqcup\cdots\sqcup T_n).
\]

\begin{lemma}\label{lem:exterior}
There is an exact sequence
\[
0\longrightarrow\Z^n\longrightarrow H_1(C;\Z)
\longrightarrow H_1(M;\Z)\longrightarrow0,
\]
in which the standard generators of $\Z^n$ map to the meridians of the tori.
\end{lemma}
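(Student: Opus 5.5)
We use the long exact sequence of the pair $(M,C)$ together with excision and Lefschetz duality. Write $N=\nu(T_1\sqcup\cdots\sqcup T_n)\cong\bigsqcup_i T_i\times D^2$. Excision gives
\[
H_k(M,C)\cong H_k(N,\partial N)\cong\bigoplus_i H_k(T_i\times D^2,T_i\times S^1),
\]
and Thom isomorphism (or Künneth) identifies this with $\bigoplus_i H_{k-2}(T_i)$. Hence $H_1(M,C)=0$, $H_2(M,C)\cong\Z^n$, generated by the normal disks $\{pt\}\times D^2$, and $H_3(M,C)\cong\bigoplus_i H_1(T_i)$.

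The relevant portion of the sequence is
\[
H_2(M)\xrightarrow{\;j\;}H_2(M,C)\cong\Z^n\xrightarrow{\;\partial\;}H_1(C)\longrightarrow H_1(M)\longrightarrow H_1(M,C)=0.
\]
The boundary map $\partial$ sends the $i$th normal disk to its boundary circle, which is the meridian $\mu_i$. Surjectivity of $H_1(C)\to H_1(M)$ is therefore immediate. It remains to prove that $\partial$ is injective, which is equivalent to $j=0$.

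The main step is to show $j=0$. Under the identification $H_2(M,C)\cong H_2(N,\partial N)\cong\bigoplus_i H_0(T_i)$, the $i$th component of $j(x)$ is the algebraic intersection number $x\cdot[T_i]$. One can see this by putting a representing surface of $x$ transverse to $T_i$: its intersection with $N$ is then a union of normal disks counted with sign. Equivalently, by Poincaré–Lefschetz duality, $H_2(M,C)\cong H^2(N)$, and $j$ is the restriction map $H^2(M)\to H^2(N)$ on Poincaré duals. Both descriptions give $x\cdot[T_i]=\langle PD(x),[T_i]\rangle$, and this vanishes because $[T_i]=0$ in $H_2(M;\Z)$ by hypothesis.

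Therefore $j=0$, so $\partial$ is injective and the sequence is exact:
\[
0\longrightarrow\Z^n\longrightarrow H_1(C;\Z)\longrightarrow H_1(M;\Z)\longrightarrow0,
\]
with the standard generators of $\Z^n$ mapping to the meridians $\mu_i$. The only delicate point is the intersection-number description of $j$. The identification through duality is standard, but the signs and orientation conventions should be checked. They do not affect the vanishing argument.
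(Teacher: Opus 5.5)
Your proposal is correct and follows essentially the same route as the paper: excision and the Thom isomorphism give $H_1(M,C)=0$ and $H_2(M,C)\cong\Z^n$. The map $H_2(M)\to H_2(M,C)$ is intersection with the classes $[T_i]$, which vanishes by nullhomology, so $\partial$ is injective and sends the generators to the meridians.
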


\begin{proof}
Excision and the Thom isomorphism give
\[
H_2(M,C;\Z)\cong\bigoplus_{i=1}^nH_0(T_i;\Z)\cong\Z^n,
\qquad H_1(M,C;\Z)=0.
\]
The map $H_2(M;\Z)\to H_2(M,C;\Z)$ is intersection with the classes $[T_i]$.  It vanishes because every $T_i$ is integrally nullhomologous.  The assertion now follows from the exact sequence of the pair $(M,C)$.
More precisely, the relevant part of that sequence is
\[
H_2(M;\Z)\longrightarrow \Z^n\xrightarrow{\partial}
H_1(C;\Z)\longrightarrow H_1(M;\Z)\longrightarrow0.
\]
Under the Thom identification, the boundary of the generator belonging to $T_i$ is the positively oriented meridian of $T_i$.  Since the first map is zero, $\partial$ is injective and gives the asserted copy of $\Z^n$.
\end{proof}

For each boundary component choose an orientation-reversing diffeomorphism
\[
\phi_i:\partial(T^2\times D^2)\longrightarrow\partial_iC
\]
and let $N$ be the resulting torus filling.  If $m_i$ denotes the meridian of the attached $T^2\times D^2$, write
\[
\gamma_i=(\phi_i)_*[m_i]\in H_1(\partial_iC;\Z).
\]

\begin{theorem}\label{thm:surgery}
For arbitrary torus fillings as above,
\[
H_1(N;\Z)\cong
H_1(C;\Z)/\langle\gamma_1,\ldots,\gamma_n\rangle.
\]
For every field $\mathbb F$,
\[
\dim_{\mathbb F}H_1(N;\mathbb F)
\geq \dim_{\mathbb F}H_1(M;\mathbb F).
\]
Moreover,
\[
e(N)=e(M),\qquad \sigma(N)=\sigma(M).
\]
\end{theorem}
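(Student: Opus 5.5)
The plan is to compute $H_1(N)$ by Mayer--Vietoris, or equivalently by van Kampen, for the decomposition
\[
N=C\cup_{\sqcup\phi_i}\bigsqcup_i T^2\times D^2 .
\]
On $H_1$, each glued piece $T^2\times D^2$ retracts onto $T^2$. Its boundary $T^3=\partial(T^2\times D^2)$ surjects onto it in $H_1$, with kernel generated by $[m_i]$. The Mayer--Vietoris sequence ends with
\[
\bigoplus_i H_1(T^3)\to H_1(C)\oplus\bigoplus_i H_1(T^2\times D^2)\to H_1(N)\to \tilde H_0(\sqcup_i T^3)\to \tilde H_0(C\sqcup\cdots).
\]
We use reduced homology, and $C$ is connected; the last map is injective, so $H_1(N)$ is the cokernel of the first map.

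The surjectivity $H_1(T^3)\to H_1(T^2\times D^2)$ lets us eliminate the $T^2\times D^2$ summands. The cokernel is then $H_1(C)$ modulo the images of the kernel classes, namely modulo $\gamma_i=(\phi_i)_*[m_i]$. This gives the first formula. The same argument works verbatim with coefficients in any field $\mathbb F$, giving
\[
H_1(N;\mathbb F)\cong H_1(C;\mathbb F)/\langle\gamma_i\rangle .
\]

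For the inequality, we tensor Lemma~\ref{lem:exterior} with $\mathbb F$. The sequence is split because $H_1(M;\Z)$ appears as a quotient; splitting is not automatic when $H_1(M)$ has torsion. A cleaner route is to use the pair sequence with $\mathbb F$ coefficients directly. Since $[T_i]=0$ integrally, the intersection map $H_2(M;\mathbb F)\to\mathbb F^n$ also vanishes, as it is computed by pairing with the $\mathbb F$-reductions of $[T_i]$. Thus $\dim H_1(C;\mathbb F)=\dim H_1(M;\mathbb F)+n$. Quotienting by the $n$ classes $\gamma_i$ lowers the dimension by at most $n$, which gives the inequality.

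The main subtle point is this step: one must check that the vanishing of the intersection map with $\mathbb F$ coefficients genuinely follows from integral nullhomology. The map is $x\mapsto (x\cdot [T_i])$. This is the Poincaré-dual evaluation, and it factors through the integral class $[T_i]\otimes 1$, which is zero.

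Finally, the Euler characteristic is additive: $e(N)=e(C)+\sum e(T^2\times D^2)-\sum e(T^3)=e(C)=e(M)$, since tori and $T^3$ have Euler characteristic $0$. For the signature, Novikov additivity gives $\sigma(N)=\sigma(C)+\sum\sigma(T^2\times D^2)=\sigma(C)$, because $T^2\times D^2$ has zero intersection form. Likewise $\sigma(M)=\sigma(C)$, so $\sigma(N)=\sigma(M)$.
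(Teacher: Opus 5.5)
Your proposal is correct and follows essentially the same route as the paper. Both arguments use Mayer--Vietoris, eliminate the $T^2\times D^2$ summands so that only the relations $\gamma_i=0$ remain, apply the $\mathbb F$-coefficient form of Lemma~\ref{lem:exterior} to get $\dim_{\mathbb F}H_1(C;\mathbb F)=\dim_{\mathbb F}H_1(M;\mathbb F)+n$ (you also justify why the $\mathbb F$-intersection map vanishes), and finish with additivity of $e$ and Novikov additivity for $\sigma$. One small point: your aside that the integral sequence splits \emph{because $H_1(M;\Z)$ is a quotient} is not a valid reason. It does split here, because $[T_i]=0$ makes $H^1(C)\to H^2(M,C)\cong\Z^n$ surjective, which gives classes dual to the meridians. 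Since you abandon that route, nothing in your proof depends on it.
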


\begin{proof}
The Mayer--Vietoris sequence for
\[
N=C\cup\coprod_{i=1}^n(T^2\times D^2)
\]
contains
\[
\bigoplus_i H_1(T^3;\Z)\longrightarrow
H_1(C;\Z)\oplus\bigoplus_iH_1(T^2\times D^2;\Z)
\longrightarrow H_1(N;\Z)\longrightarrow0.
\]
For the $i$th component, two generators of $H_1(T^3;\Z)$ map to a basis of
$H_1(T^2\times D^2;\Z)$ and may therefore be eliminated.  The third generator, the meridian $m_i$ of the attached solid torus, maps to zero in $H_1(T^2\times D^2;\Z)$ and to $\gamma_i$ in $H_1(C;\Z)$.  Thus the only remaining relation from that boundary component is $\gamma_i=0$, which proves the integral formula.

Lemma~\ref{lem:exterior}, with coefficients in $\mathbb F$, gives
\[
\dim_{\mathbb F}H_1(C;\mathbb F)
=\dim_{\mathbb F}H_1(M;\mathbb F)+n.
\]
The filling imposes at most $n$ independent relations, which proves the inequality.  Euler characteristic and signature are unchanged because the removed and attached pieces have Euler characteristic and signature zero; the signature statement also follows from Novikov additivity.
\end{proof}

\begin{corollary}\label{cor:noCP2}
Let $N$ be obtained by arbitrary torus surgeries on a fixed finite disjoint family of tori in the Cartwright--Steger surface.  Then, over every field $\mathbb F$,
\[
\dim_{\mathbb F}H_1(N;\mathbb F)\geq2.
\]
Consequently $N$ is neither simply connected nor a rational homology $\CP^2$.  In particular, direct Luttinger surgeries on such a family cannot produce a standard or exotic $\CP^2$.
\end{corollary}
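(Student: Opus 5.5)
The corollary follows by combining Corollary~\ref{cor:CS-atoroidal} with Theorem~\ref{thm:surgery}. Let $T_1,\ldots,T_n\subset X$ be the fixed disjoint family of embedded tori, oriented arbitrarily. By Corollary~\ref{cor:CS-atoroidal}, each $T_i$ is integrally nullhomologous in $X$. This is the only hypothesis needed to invoke the setup of Section~3 with $M=X$. Theorem~\ref{thm:surgery} then applies to any choice of gluing maps $\phi_i$, whether Luttinger, logarithmic, or otherwise. For every field $\mathbb F$ it gives
\[
\dim_{\mathbb F}H_1(N;\mathbb F)\geq\dim_{\mathbb F}H_1(X;\mathbb F).
\]
Since $H_1(X;\Z)\cong\Z^2$ is free, the universal coefficient theorem gives $H_1(X;\mathbb F)\cong\mathbb F^2$. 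This yields the bound $\dim_{\mathbb F}H_1(N;\mathbb F)\ge 2$ for every $\mathbb F$.

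The consequences follow quickly. Taking $\mathbb F=\Q$ gives $b_1(N)\geq 2$. Hence $H_1(N;\Z)$ is infinite, so $\pi_1(N)$ is nontrivial and $N$ is not simply connected. A rational homology $\CP^2$ has $b_1=0$, so $N$ is not one either. Any manifold homeomorphic to $\CP^2$, whether standard or exotic, is simply connected. A direct Luttinger surgery is a special case of a torus filling as in Theorem~\ref{thm:surgery}. So no sequence of Luttinger surgeries on such a fixed family can yield a manifold homeomorphic to $\CP^2$.

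The main point needing care is interpreting ``arbitrary torus surgeries on a fixed family''. If surgeries are performed successively on the same disjoint collection, the result is still a single simultaneous filling of $C=X\setminus\Int\nu(\bigsqcup T_i)$. The complement $C$ is unchanged, and only the gluing maps differ. So Theorem~\ref{thm:surgery} covers this case directly, and the nullhomology hypothesis is used only on the original $X$.

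I would add a remark on what the argument does not cover. It does not handle surgery on new tori that appear in an already-surgered manifold. Such tori need not be nullhomologous, and that case is exactly why the statement restricts to a fixed family in $X$.
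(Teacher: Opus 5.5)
Your proposal is correct and matches the paper's proof: every torus is integrally nullhomologous by Corollary~\ref{cor:CS-atoroidal}, and Theorem~\ref{thm:surgery} then applies with $H_1(X;\mathbb F)\cong\mathbb F^2$. Your universal-coefficient step and your remark that successive fillings of the same complement form one simultaneous filling only make explicit what the paper leaves implicit; the latter agrees with the remark that follows the corollary.
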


\begin{proof}
Every torus is integrally nullhomologous by Corollary~\ref{cor:CS-atoroidal}, and $H_1(X;\Z)\cong\Z^2$.  Apply Theorem~\ref{thm:surgery}.
\end{proof}

\begin{remark}
Corollary~\ref{cor:noCP2} concerns fillings of the common complement of a fixed disjoint torus link in $X$.  It does not apply automatically to later surgeries on core tori created by previous fillings or to newly selected tori whose nullhomology has not been verified in the manifold present at that stage.
\end{remark}

If $d=b_1(N)-2$, then $d\geq0$ and the preservation of Euler characteristic and signature gives
\[
(b_2(N),b_2^+(N),b_2^-(N))=(5+2d,3+d,2+d).
\]

\section{The intermediate double-cover construction}

We first describe the covering tower that organizes the construction.  Let
$N$ be the kernel of reduction modulo two on the abelianization of $\Pi$.
The corresponding fourfold cover and any intermediate double cover fit into
\[
X_4\longrightarrow X_2\longrightarrow X,
\qquad
\pi_1(X_4)=N\subset\Pi_2=\pi_1(X_2)\subset\Pi=\pi_1(X),
\]
with
\[
\Pi/N\cong(\Z/2)^2,
\qquad [\Pi_2:N]=2.
\]
The decisive point is that the image of the fundamental group of the nodal
curve is precisely $N$.  It follows that the fourfold cover contains four
copies of the curve and each intermediate double cover contains two, with
every component mapping homeomorphically to the original curve.  After one
component is smoothed and blown up, its image in $\Pi_2$ is still $N$.  The
positive complement contributes no group after the meridian is killed, so
the symplectic sum has fundamental group $\Pi_2/N\cong\Z/2$.  The remainder
of this section defines the construction; the subgroup and complement
assertions are proved in Section~\ref{sec:subgroup}.

Let $D\subset X$ be the nodal curve used in the Cartwright--Steger
construction.  Its normalization and stabilizer are described in
\cite[Proposition~2.4 and Section~2.8]{CKY}; see also
\cite[Section~2.3, Proposition~1]{ASY}.  Its normalization has genus four and
$D^2=-1$.  Let
\[
\Pi=\pi_1(X),\qquad
A=\operatorname{im}\bigl(\pi_1(\widehat D)\to\Pi\bigr),
\qquad N=\ker\bigl(\Pi\to H_1(X;\Z/2)\bigr).
\]
The eight generators of the normalization group are displayed after \cite[Proposition~2.4]{CKY}; their images in homology are computed in \cite[Section~2.8]{CKY}.  Since $H_1(X;\Z)\cong\Z^2$, one has
\[
[\Pi:N]=4,\qquad \Pi/N\cong(\Z/2)^2.
\]
The assertion in \cite[Section~2.3, Proposition~1]{ASY} that the nodal-curve
subgroup equals this kernel will be verified below.
To identify the image of the nodal curve itself with $N$, one must also calculate the branch-joining generator and certify the resulting subgroup equality.  The normal-closure statement alone is insufficient.

Choose an order-two subgroup of $\Pi/N$, let $\Pi_2$ be its inverse image, and denote the corresponding double cover by $X_2\to X$.  In each of the three possible covers choose one lifted component $D_2$ of the nodal curve.  Its two local branches belong to the same component and meet positively.  Smooth this node symplectically, and denote the resulting embedded genus-five surface by $\overline D$.

We recall why this operation is symplectic and preserves the homology class.
The covering map is unramified and holomorphic, so $D_2$ is again a complex
curve with an ordinary node.  In holomorphic coordinates on a small ball $U$
about the node, its two branches have the standard local model
\[
\{z_1z_2=0\}=\{z_1=0\}\cup\{z_2=0\}\subset\mathbb C^2.
\]
The standard symplectic resolution of a positive node \cite{Gompf,McCarthyWolfson}
replaces them by
\[
\{z_1z_2=\varepsilon\},\qquad 0<|\varepsilon|\ll1,
\]
with the replacement adjusted through the local symplectic model to agree
with the original branches near $\partial U$.  The smoothed local surface is
a complex annulus in the standard model and is therefore symplectic.  The
original pair of disks and the smoothing determine relative two-cycles with
the same oriented boundary in $\partial U$.  Their relative classes agree
because
\[
H_2(U,\partial U;\Z)=H_2(B^4,S^3;\Z)=0.
\]
Thus the local replacement does not change the global class:
\[
[\overline D]=[D_2]\in H_2(X_2;\Z).
\]
The annulus joins the two punctures of the genus-four normalization and adds
one handle.  Hence
\[
g(\overline D)=5,
\qquad [\overline D]^2=[D_2]^2=-1.
\]
Then
\[
[\Pi_2:N]=2.
\]
Choose a regular point $q\in\overline D$ away from the smoothing chart and blow it up.  The proper transform is a genus-five symplectic surface
\[
\Sigma_-\subset X_2\#\overline{\CP}^{\,2},
\qquad [\Sigma_-]^2=-2.
\]
If $E$ denotes the exceptional class of the blowup, then
\[
[\Sigma_-]=p^*[\overline D]-E,
\qquad
[\Sigma_-]^2=[\overline D]^2+E^2=-1-1=-2.
\]
Because the blowup point is a regular point of $\overline D$, the exceptional
sphere meets the proper transform once.  This intersection will give a disk
in the complement bounded by the meridian of $\Sigma_-$.
Unlike the construction in \cite[Lemma~3]{ASY}, we smooth the node before blowing up a regular point.  This preserves the genus and final square while making the exceptional sphere a geometric dual.

\begin{lemma}\label{lem:positive-input}
The simply connected minimal symplectic manifold $X_{2,3}$ of
\cite[Theorem~6]{ASY} has
\[
e(X_{2,3})=9,\qquad \sigma(X_{2,3})=-1.
\]
The construction in \cite[Section~5.3, pp.~44--45]{ASY} contains square-zero
symplectic surfaces $F_2$ and $F_3$ of genera two and three that meet
transversely and positively in one point.  Their
symplectic resolution $F$ has genus five and square $+2$.  A parallel copy
$P$ of $F_2$ may be chosen so that
\[
P^2=0,\qquad P\cdot F=1.
\]
\end{lemma}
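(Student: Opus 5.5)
The lemma mostly collects data from \cite{ASY}, so the plan is to recall the construction of $X_{2,3}$, read off its invariants, and then carry out the local calculations that give $F$ and $P$.

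\emph{Invariants.} By \cite[Theorem~6]{ASY}, $X_{2,3}$ is homeomorphic to $3\CP^2\#4\overline{\CP}^{\,2}$. Then $b_2^+=3$ and $b_2^-=4$, so
\[
e=2+7=9,\qquad \sigma=3-4=-1,
\]
since $b_1=b_3=0$ for a simply connected manifold. As a cross-check I would recompute these from the explicit construction in \cite[Section~5.3]{ASY}. That construction is a symplectic sum of $T^2\times\Sigma_2$-type pieces (or a blown-up product) followed by Luttinger surgeries. Euler characteristic adds as $e(M_1)+e(M_2)-2e(\Sigma)$, and signature is additive. Luttinger surgery changes neither, by Theorem~\ref{thm:surgery}.

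\emph{The surfaces $F_2$ and $F_3$.} I would locate them in the product piece. There the genus-two surface is a fiber $\{pt\}\times\Sigma_2$ and the genus-three surface comes from a section-type surface. Each has square zero because it is a fiber or section of a trivial bundle. They meet transversely and positively in one point because a fiber and a section of a product meet once with product orientation.

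The main thing to check is that both surfaces survive the Luttinger surgeries and the sum unchanged. The Lagrangian tori used in the surgeries must be disjoint from $F_2\cup F_3$. I would verify this directly from the torus positions listed in \cite{ASY}; that bookkeeping is the main obstacle.

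\emph{The resolution $F$.} Resolving the single positive transverse point, as for the node earlier in this section, joins a genus-two and a genus-three surface by an annulus. This gives a connected symplectic surface of genus $2+3=5$. Homologically $[F]=[F_2]+[F_3]$, so
\[
F^2=F_2^2+F_3^2+2F_2\cdot F_3=0+0+2=2.
\]

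\emph{The parallel copy $P$.} Take $P$ to be a push-off of $F_2$ along a nonvanishing normal section, which exists because the normal bundle is trivial. For example, use a nearby fiber $\{pt'\}\times\Sigma_2$. Then $P^2=F_2^2=0$, and
\[
P\cdot F=P\cdot F_2+P\cdot F_3=0+1=1.
\]
Geometrically, $P$ is disjoint from $F_2$ and meets $F_3$ once, at a point away from the small smoothing ball. So $P$ meets $F$ transversely in exactly one positive point.
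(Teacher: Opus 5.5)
Your proposal is correct and follows the paper's proof: both take the invariants and the surfaces $F_2,F_3$ from the cited construction, both get $g(F)=5$ and $F^2=2$ from $[F]=[F_2]+[F_3]$, and both take $P$ to be a normal push-off of $F_2$. The one difference is that the paper puts the single point of $P\cap F$ inside the smoothing chart (at $w=\varepsilon/\delta$ with $0<|\varepsilon|\ll|\delta|$), whereas you shrink the smoothing ball so that $P$ misses it and meets only the $F_3$ portion; this works equally well. Your description of where $F_2$ and $F_3$ sit in product pieces, and the Luttinger-surgery cross-check, are not needed, since the lemma simply takes these surfaces and invariants from the cited work.
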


\begin{proof}
The simple connectivity, minimality, numerical invariants, and the two
intersecting square-zero surfaces are the properties established in the
cited construction.  Resolving their single positive intersection gives
$[F]=[F_2]+[F_3]$.  Hence
\[
g(F)=g(F_2)+g(F_3)=5,
\qquad
F^2=F_2^2+2F_2\cdot F_3+F_3^2=2.
\]
Choose $P$ as a sufficiently small symplectic normal translate of $F_2$.
To see the intersection with $F$, take positive complex coordinates near
$F_2\cap F_3$ in which $F_2=\{z=0\}$, $F_3=\{w=0\}$, and the resolution is
$zw=\varepsilon$.  Locally the parallel surface is $\{z=\delta\}$, and it
meets the resolution at the single point $w=\varepsilon/\delta$.  Taking
$0<|\varepsilon|\ll|\delta|$ places this point inside the smoothing chart.
Outside that chart $P$ is disjoint from the $F_2$ portion and has no further
intersection with the $F_3$ portion.  Hence $P^2=0$ and $P\cdot F=1$.
\end{proof}

Form the symplectic sum
\[
Z=(X_2\#\overline{\CP}^{\,2})
\mathbin{\#_{\Sigma_-=F}}X_{2,3}.
\]
For the symplectic statements, the areas are scaled to agree and the gluing is
chosen to be symplectically admissible in Gompf's sense.  The fundamental-group
calculation below applies more generally to every orientation-reversing
isomorphism of the normal circle bundles.  Thus the topological gluing statement
is kept separate from the assertion that the sum is symplectic.

\begin{proposition}\label{prop:numerics}
The numerical invariants of $Z$ are
\[
e(Z)=32,\qquad \sigma(Z)=0,
\qquad \chi_h(Z)=8,\qquad c_1^2(Z)=64.
\]
\end{proposition}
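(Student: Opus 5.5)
The plan is to use the standard additivity formulas for a symplectic sum along genus-$g$ surfaces, and to compute the invariants of each summand from data already recorded in the paper.

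\textbf{Invariants of the summands.} Since $X_2\to X$ is an unramified double cover,
\[
e(X_2)=2e(X)=6,\qquad \sigma(X_2)=2\sigma(X)=2.
\]
One blowup then gives
\[
e(X_2\#\overline{\CP}^{\,2})=7,\qquad \sigma(X_2\#\overline{\CP}^{\,2})=1.
\]
By Lemma~\ref{lem:positive-input}, the other summand has $e(X_{2,3})=9$ and $\sigma(X_{2,3})=-1$.

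\textbf{Euler characteristic.} For a sum along surfaces of genus $g$,
\[
e(Z)=e(M_1)+e(M_2)-2e(\Sigma_g)=e(M_1)+e(M_2)+4g-4.
\]
This follows from Mayer--Vietoris, or from decomposing each side as a surface complement glued to a tubular neighbourhood. Here the glued surfaces $\Sigma_-$ and $F$ both have genus $5$, so
\[
e(Z)=7+9+16=32.
\]

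\textbf{Signature.} The sum is defined because the squares are opposite: $[\Sigma_-]^2=-2$ and $F^2=+2$. Novikov additivity applied to the complements gives
\[
\sigma(Z)=\sigma(M_1)+\sigma(M_2).
\]
Indeed, each $\sigma(M_i)$ equals the signature of the complement plus the signature of the tubular neighbourhood. The neighbourhoods contribute $\operatorname{sign}(\mp2)=\mp1$, and these two contributions cancel. Hence
\[
\sigma(Z)=1+(-1)=0.
\]

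\textbf{Derived invariants.} From the first two quantities,
\[
\chi_h(Z)=\frac{e+\sigma}{4}=8,\qquad c_1^2(Z)=2e+3\sigma=64.
\]

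\textbf{Main obstacle.} The only delicate step is justifying the signature formula when the self-intersections are nonzero. I would handle it exactly as above: write $M_i=C_i\cup\nu_i$, where $\nu_i$ is the disk bundle of Euler number $\pm2$, and note that $Z=C_1\cup C_2$ is glued along the circle bundles. Novikov additivity then yields $\sigma(C_1)+\sigma(C_2)=(1-(-1))+(-1-1)=0$. As a sanity check, $e(Z)=e(C_1)+e(C_2)$ with $e(C_i)=e(M_i)-e(\Sigma_5)=e(M_i)+8$ gives $15+17=32$, which agrees with the Euler characteristic computed above.
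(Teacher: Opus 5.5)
Your proposal is correct and follows the paper's proof. You use multiplicativity under the double cover plus one blowup for the first block, the cited values for $X_{2,3}$, the sum formulas $e(Z)=e(M_1)+e(M_2)-2e(\Sigma_5)$ and $\sigma(Z)=\sigma(M_1)+\sigma(M_2)$, and then the defining formulas for $\chi_h$ and $c_1^2$; your Novikov-additivity justification, with the $\mp1$ disk-bundle signatures cancelling, correctly spells out what the paper simply cites as the fiber-sum formula.
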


\begin{proof}
The double cover has $e(X_2)=6$ and $\sigma(X_2)=2$.  Hence
\[
e(X_2\#\overline{\CP}^{\,2})=7,\qquad
\sigma(X_2\#\overline{\CP}^{\,2})=1.
\]
By Lemma~\ref{lem:positive-input}, the second block satisfies
$e(X_{2,3})=9$ and $\sigma(X_{2,3})=-1$.  Since $e(F)=-8$, the fiber-sum
formulas give
\[
e(Z)=7+9-2(-8)=32,\qquad \sigma(Z)=1-1=0.
\]
The remaining formulas follow from
\[
\chi_h=\frac{e+\sigma}{4},\qquad c_1^2=2e+3\sigma.
\]
\end{proof}

\begin{theorem}\label{thm:group}
For every intermediate double cover and every orientation-reversing
normal-circle-bundle gluing of the two complements, the underlying smooth sum
satisfies
\[
\pi_1(Z)\cong\Pi_2/N\cong\Z/2.
\]
\end{theorem}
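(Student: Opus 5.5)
The plan is to apply Seifert--van Kampen to the decomposition
\[
Z=W_1\cup_{Y}W_2,\qquad W_1=(X_2\#\overline{\CP}^{\,2})\setminus\nu\Sigma_-,\quad W_2=X_{2,3}\setminus\nu F,
\]
where $Y\cong\Sigma_5\times S^1$ is the common boundary.  Then
\[
\pi_1(Z)\cong\pi_1(W_1)*_{\pi_1(Y)}\pi_1(W_2).
\]
The group $\pi_1(Y)$ is generated by the images of $\pi_1(\Sigma_5)$ (via a push-off) and the meridian $\mu$, which is central.  The gluing is an orientation-reversing bundle isomorphism, so it identifies the two meridians up to sign and identifies the push-off surface groups up to a diffeomorphism of $\Sigma_5$ and a shift by powers of $\mu$.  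Whichever gluing is chosen, the relations imposed are therefore: the meridians agree, and the images of the surface generators agree modulo $\mu$.

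First I will treat the negative side.  The exceptional sphere $E$ meets $\Sigma_-$ transversely once, so $E\setminus\nu\Sigma_-$ is a disk in $W_1$ bounded by $\mu_1$.  Hence $\mu_1=1$, and
\[
\pi_1(W_1)\cong\pi_1(X_2\#\overline{\CP}^{\,2})\cong\Pi_2 .
\]
The image of the push-off of $\Sigma_-$ in $\Pi_2$ is the image of $\pi_1(\overline D)$.  Smoothing the node adds a handle whose new loops pass through the former node; their images are products of arcs on the two branches, and they give exactly the branch-joining generators in $\pi_1(D_2)$.  The image therefore equals the image of $\pi_1(D_2)\to\Pi_2$.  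Since $D_2$ maps homeomorphically onto $D$, this image is the image of $\pi_1(D)$ in $\Pi$, lifted to $\Pi_2$.  That is the nodal-curve subgroup, which Section~\ref{sec:subgroup} identifies with $N$ through the Todd--Coxeter computation and the branch-joining generator.  Blowing up at a regular point does not change this image.

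Next I will treat the positive side.  I need that $\pi_1(W_2)$ is normally generated by $\mu_2$; more precisely, that after $\mu_2$ is killed the group $\pi_1(W_2)$ becomes trivial.  Killing $\mu_2$ refills $\nu F$, and $X_{2,3}$ is simply connected.  Moreover, the dual surface $P$ meets $F$ once, so $P\setminus\nu F$ is a punctured genus-two surface.  This shows that $\mu_2$ is a product of commutators of elements in the image of $\pi_1(W_2)$.  That fact alone does not make $\mu_2$ trivial in $\pi_1(W_2)$, but it is not needed: in the amalgam $\mu_2$ is identified with $\mu_1=1$.  So the quotient of $\pi_1(W_2)$ by $\langle\langle\mu_2\rangle\rangle$ is $\pi_1(X_{2,3})=1$, and the surface generators from $W_2$ become trivial after that identification.

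Finally I will assemble the pieces.  Set $\mu=1$.  The amalgam becomes $\Pi_2 * 1$, modulo the relations identifying the image of $\pi_1(\Sigma_-)$ with the image of $\pi_1(F)$, which is now trivial.  This gives
\[
\pi_1(Z)\cong\Pi_2/\langle\langle A_2\rangle\rangle,
\]
where $A_2=N$ is the surface image.  Since $N$ is normal, the quotient is $\Pi_2/N\cong\Z/2$, independent of the gluing.

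The main obstacle is the equality of the image of the smoothed curve with $N$, including the branch-joining loop.  The normal-closure statement only gives $\langle\langle A\rangle\rangle=N$; here we need the actual subgroup equality $A=N$, because the lift to $\Pi_2$ must land in $N$ and a proper subgroup could have a different normal closure in $\Pi_2$.  I would defer this to the explicit coset enumeration of Section~8.  That computation checks that the eight normalization generators, together with the branch-joining element, generate a subgroup of index $4$ in $\Pi$ that is contained in $N$.
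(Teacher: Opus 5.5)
Your proposal is correct and follows essentially the same route as the paper. The exceptional sphere kills the negative meridian, so the negative complement has group $\Pi_2$. The positive complement is normally generated by its meridian and therefore dies in the sum. The remaining relations are the normal closure of the smoothed-curve image $B=\langle A,h\rangle$, which the coset enumeration identifies with $N$.

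One slip is harmless. $Y$ is not $\Sigma_5\times S^1$ but the circle bundle of Euler number $\pm2$: there is no push-off of the whole surface, and the boundary relation reads $\prod[\alpha_i,\beta_i]=\mu^{\pm2}$. This changes nothing once $\mu=1$. Also, in the paper's notation the equality you need is $B=N$ for the curve group including $h$, not $A=N$ for the normalization image alone.
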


\begin{proof}
Let $A^\circ$ and $B^\circ$ denote the negative and positive surface
complements.  Lemma~\ref{lem:dual-sphere} shows that the meridian is trivial in
$\pi_1(A^\circ)$, and hence
\[
\pi_1(A^\circ)\cong\Pi_2.
\]
The gluing identifies the positive meridian with the inverse negative
meridian.  Proposition~\ref{prop:F-complement} shows that
$\pi_1(B^\circ)$ is normally generated by its meridian, so its entire image
dies in the sum.  The remaining relations are therefore the normal closure
of the image of the summing surface on the negative side.  By
Lemma~\ref{lem:smoothing-image} this image is the nodal-curve subgroup $B$,
and Proposition~\ref{prop:subgroup-index} identifies $B$ with $N$.
Proposition~\ref{prop:exact-quotient} then gives
\[
\pi_1(Z)\cong
\Pi_2/\langle\!\langle N\rangle\!\rangle_{\Pi_2}
=\Pi_2/N\cong\Z/2.
\]
The same argument also shows why meridional twists in the gluing do not alter
the quotient: their additional factors are already trivial.
\end{proof}

\begin{remark}
The reproducible Todd--Coxeter computation accompanying the paper verifies
that the nodal-curve subgroup is $N$.  Lemma~\ref{lem:smoothing-image}
shows that smoothing introduces no further condition, while
Lemma~\ref{lem:dual-sphere} kills the negative meridian.
\end{remark}

It follows that rationally
\[
b_1(Z)=0,\qquad b_2(Z)=30,\qquad b_2^+(Z)=b_2^-(Z)=15.
\]
Thus $Z$ has the rational homology of
\[
15\CP^2\#15\overline{\CP}^{\,2}.
\]
The integral homology and, for compatible gluings, the $w_2$-type are determined below.  A homeomorphism classification of the quotient requires the equivariant intersection data.

\section{Minimality and the \texorpdfstring{$w_2$}{w2}-type}

The subsequent $w_2$-type statement uses a gluing compatible with the indicated dual surfaces.

\begin{theorem}\label{thm:minimal}
For every symplectically admissible gluing, the symplectic manifold $Z$ is
minimal.
\end{theorem}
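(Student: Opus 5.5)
We plan to invoke Usher's minimality theorem for symplectic sums \cite{Usher}. It says that a symplectic sum $Z=M_1\#_{\Sigma_1=\Sigma_2}M_2$ along surfaces of positive genus is minimal unless one of two exceptional situations occurs. The first is that $M_i\setminus\Sigma_i$ contains a symplectic exceptional sphere disjoint from $\Sigma_i$. The second is that $M_i$ is a rational or ruled surface and $\Sigma_i$ is a section or fibre of the kind listed in Usher's theorem. For the exceptional spheres that do meet $\Sigma_i$, Usher's theorem requires them to be such that the sum reduces to a blowdown of a rational or ruled configuration; the plan is to rule out each possibility on each side separately.

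On the positive side $X_{2,3}$ is minimal by Lemma~\ref{lem:positive-input}, so it contains no exceptional sphere at all. Its Euler characteristic is $9$ and its signature is $-1$, so $c_1^2=2e+3\sigma=15>0$. A rational or ruled surface with $b_1=0$ that is minimal is $\CP^2$ or an $S^2$-bundle over $S^2$, and neither has these invariants. Hence nothing on this side triggers an exception.

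The negative side $X_2\#\overline{\CP}^{\,2}$ needs the real argument. Since $X_2$ is a compact ball quotient it is aspherical, so $\pi_2(X_2)=0$. It therefore contains no essential spheres, and it is minimal of general type: it is not rational or ruled because its Kodaira dimension is $2$, or more simply because it is aspherical with infinite $\pi_1$ and positive $c_1^2=18$. By the uniqueness of exceptional classes in the blowup of a minimal surface that is not rational or ruled (Li, Taubes--Seiberg--Witten), the only symplectic exceptional class in $X_2\#\overline{\CP}^{\,2}$ is $E$. Every exceptional sphere is then homologous to $E$. Next we note $E\cdot[\Sigma_-]=E\cdot(p^*[\overline D]-E)=1\neq0$. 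So no exceptional sphere is disjoint from $\Sigma_-$, and we have to go to Usher's refined statement. That statement says that when exceptional spheres meet the summing surface, the sum is nonminimal only if $M_i$ is rational or ruled with $\Sigma_i$ of a special type. This fails here because $X_2\#\overline{\CP}^{\,2}$ is not rational or ruled, as its Kodaira dimension is still $2$.

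The main obstacle I expect is stating Usher's criterion precisely for the case where the only exceptional class meets the summing surface, in particular ensuring that the exception is limited to the rational/ruled cases. As a fallback I would use the Seiberg--Witten and Kodaira-dimension route. By Usher's results, a symplectic sum of two manifolds of general type, here with $\kappa=2$ on both sides, again has Kodaira dimension $2$. Then $Z$ is minimal precisely if the canonical class is the unique basic class with square $c_1^2(Z)=64$. Equivalently, we can check that no exceptional class $e$ can exist: such a class would satisfy $K_Z\cdot e=-1$, and the restrictions of $K_Z$ to each side, namely $K_{X_2}+E+[\Sigma_-]$-type adjunction terms, force $K\cdot e\geq0$ by positivity of intersections with the pseudoholomorphic representatives.
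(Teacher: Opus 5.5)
Your main argument is correct and follows the paper's proof: you apply Usher's theorem, show that every exceptional sphere on the negative side lies in the class $\pm E$ (so it meets $\Sigma_-$, since $E\cdot[\Sigma_-]=1$), use the minimality of $X_{2,3}$, and exclude the ruled exceptional case. The paper differs only in two sub-steps: it gets $[S]=\pm E$ from the asphericity of $X_2$ via the blowdown map rather than from Li's Seiberg--Witten uniqueness of exceptional classes, and it rules out the $S^2$-bundle exception by signature (the summands have $1$ and $-1$, a ruled manifold has $0$) rather than by Kodaira dimension. Your Seiberg--Witten fallback is therefore unnecessary.
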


\begin{proof}
Let $p:X_2\#\overline{\CP}^{\,2}\to X_2$ be the blowdown map.  The compact ball quotient $X_2$ is aspherical.  Hence, for every embedded sphere $S$, the map $p|_S$ is nullhomotopic and $p_*[S]=0$.  Since $\ker p_*=\Z E$, the equality $S^2=-1$ implies $[S]=\pm E$.  On the other hand,
\[
E\cdot[\Sigma_-]=E\cdot(p^*[\overline D]-E)=1.
\]
Thus no exceptional sphere is disjoint from $\Sigma_-$.  The other block
$X_{2,3}$ is minimal by Lemma~\ref{lem:positive-input}.  The exceptional
case in Usher's theorem requires one summand to be an $S^2$-bundle over a
surface with the summing surface as a section.  Such a ruled manifold has
signature zero, whereas the two summands here have signatures $1$ and $-1$.
The exceptional case therefore does not occur.  Usher's minimality theorem
for positive-genus symplectic sums now proves that $Z$ is minimal
\cite[Theorem~1.1]{Usher}.
\end{proof}

Let $P\subset X_{2,3}$ be the surface supplied by
Lemma~\ref{lem:positive-input}.

\medskip
\noindent\textbf{Definition.}
A gluing is \emph{compatible with the dual surfaces} $E$ and $P$ if its base
diffeomorphism sends the marked point $E\cap\Sigma_-$ to $P\cap F$ and, in
oriented product coordinates on the normal bundles over small disks about
these points, its fiber-reversing lift has constant phase and identifies the
boundary circles of the punctured surfaces.

Compatible symplectically admissible gluings exist.  First scale the
symplectic forms so that the two summing surfaces have equal area, and choose
an area-preserving base diffeomorphism carrying the marked point
$E\cap\Sigma_-$ to $P\cap F$.  The normal bundles have Euler numbers $-2$
and $+2$, so after reversing the circle fibers the base map admits a bundle
isomorphism.  Over a disk about the marked point this isomorphism is described
by a phase map to $S^1$.  A unitary gauge transformation supported in a
slightly larger disk makes that phase constant on the smaller disk, without
changing the normal Euler class.  Because the adjustment is unitary, it
preserves the fiberwise symplectic neighborhood data.  The symplectic
neighborhood theorem then gives an admissible gluing in the sense of
\cite{Gompf}.

\begin{lemma}\label{lem:framing-sum}
Let $Q_i$ be oriented surfaces meeting the summing surfaces $\Sigma_i$ once,
and remove small disks about the intersection points.  Suppose that the
normal-circle-bundle gluing matches the two boundary circles and, in product
trivializations over these circles, reverses the normal fiber with constant
phase.  Then the punctured surfaces glue to an oriented surface $Q$ and
\[
Q^2=Q_1^2+Q_2^2.
\]
\end{lemma}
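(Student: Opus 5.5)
We would compute $Q^2$ by taking a pushoff $Q'$ of $Q$ and counting $Q\cdot Q'$. The idea is to build $Q'$ from pushoffs $Q_i'$ of $Q_1$ and $Q_2$ that are adapted to the gluing region, so that no new intersections arise in the neck.

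\textbf{Local model.} Set $\nu_i=\nu(\Sigma_i)$ and let $D_i\subset\Sigma_i$ be a small disk about the point $x_i=Q_i\cap\Sigma_i$. Isotope $Q_i$ so that, over $D_i$ and in the given product trivialization $\nu_i|_{D_i}\cong D_i\times D^2$, it is a fiber $\{x_i\}\times D^2$. Then $Q_i\cap\partial\nu_i=\{x_i\}\times S^1$, a normal circle. Because the gluing reverses the fiber with constant phase and matches the base points, this circle is identified with the corresponding circle on the other side. The two punctured surfaces therefore glue to a closed surface $Q$. The two pieces induce opposite orientations on the common boundary circle, so $Q$ is oriented compatibly with $Q_1$ and $Q_2$.

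\textbf{Construction of the pushoff.} Choose a pushoff $Q_i'$ of $Q_i$ with $Q_i\cdot Q_i'=Q_i^2$, arranged so that inside $\nu_i|_{D_i}$ it is the parallel fiber $\{y_i\}\times D^2$ for a nearby point $y_i\in D_i$. All intersections of $Q_i$ with $Q_i'$ then lie outside $\nu_i$. This is possible because near the fiber disk the normal bundle of $Q_i$ is trivialized by the base direction of $D_i$. Any section of the normal bundle of $Q_i$ can be homotoped, away from the fiber disk, to agree with the constant base-direction section there. The zeros of the section, counted with sign, still total $Q_i^2$.

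\textbf{Matching in the neck.} Choose $y_2$ to be the image of $y_1$ under the base diffeomorphism. The constant-phase condition means the gluing is a product map over $D_1$, namely (base diffeomorphism)$\times$(fixed reflection of $S^1$). Hence it carries $\{y_1\}\times S^1$ onto $\{y_2\}\times S^1$, and the punctured $Q_i'$ glue to a closed surface $Q'$. Near the neck, $Q$ and $Q'$ are the disjoint products $\{x\}\times(\text{collar})$ and $\{y\}\times(\text{collar})$. Thus $Q\cap Q'$ is the disjoint union of the intersections inside the two complements, and
\[
Q\cdot Q'=Q_1\cdot Q_1'+Q_2\cdot Q_2'=Q_1^2+Q_2^2 .
\]

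\textbf{Main obstacle.} The delicate step is checking that $Q'$ really is a pushoff of $Q$, i.e.\ that it is the image of a section of the normal bundle of $Q$. Equivalently, the two base-direction normal framings over the neck circle must agree under the gluing. This is exactly where the constant-phase hypothesis enters. If the phase varied, the fiber-reversing map would twist the base direction relative to the circle by the phase's winding number. That would introduce a correction term of that winding number into $Q^2$.

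I would verify the agreement explicitly in coordinates. Write the gluing as $(b,\theta)\mapsto(\phi(b),c-\theta)$ with $c$ constant, and note that the normal direction $\partial_b$ is carried to $\phi_*\partial_b$, which is nonvanishing and untwisted along the circle. Orientation signs need a separate check: the base orientation is preserved while the fiber orientation is reversed, and one must confirm that these combine to give the orientation of $Q$ used above.
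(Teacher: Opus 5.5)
Your argument is correct and is essentially the paper's proof written with explicit pushoffs. The paper computes relative Euler numbers of the punctured normal bundles with respect to the product framing on the neck circle; because that framing extends over the deleted fiber disk, each side contributes $Q_i^2$, and the constant phase rules out a transition-degree correction. Your pushoffs, which agree with parallel fibers $\{y_i\}\times D^2$ near the neck, are exactly sections realizing that framing, and matching them across the neck is the same step.

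One aside: your heuristic that a varying phase would add a winding-number correction is not right. A fiberwise bundle map sends a base vector $v$ to $\phi_*v$ plus a component along the fiber circle, which is tangent to $Q$, so the normal framings match in any case. Your proof does not rely on that remark.
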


\begin{proof}
The self-intersection of a closed oriented surface is the Euler number of its
oriented normal bundle.  Choose oriented product coordinates at
$Q_i\cap\Sigma_i$ and use their product framing on the boundary of the disk
removed from $Q_i$.  This framing extends across the deleted normal disk.
Consequently, removing that disk contributes no local relative Euler number,
and the relative Euler number of the punctured normal bundle remains
$Q_i^2$.  Under the gluing, a possible
correction to their sum is the degree of the transition map between these two
boundary sections.  The constant-phase hypothesis makes this transition map
constant, hence of degree zero.  Relative Euler numbers therefore add and give
$Q^2=Q_1^2+Q_2^2$.
\end{proof}

\begin{proposition}\label{prop:w2}
Choose a gluing compatible with the indicated dual surfaces.  Then $Z$ has $w_2$-type I, meaning that its universal cover is non-spin.  Its universal double cover is homeomorphic to
\[
31\CP^2\#31\overline{\CP}^{\,2}.
\]
\end{proposition}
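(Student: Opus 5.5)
The plan is to exhibit an embedded closed surface in $Z$ with odd self-intersection whose preimage in the universal double cover $\widetilde Z$ still has odd square, or more simply to find a class in $H_2(\widetilde Z;\Z)$ with odd square. Freedman's classification then identifies $\widetilde Z$.

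\emph{Step 1: the glued surface.} I would glue the exceptional sphere $E$, which meets $\Sigma_-$ once, to the parallel surface $P$ of Lemma~\ref{lem:positive-input}, which meets $F$ once. A compatible gluing matches the two boundary circles, so Lemma~\ref{lem:framing-sum} produces an oriented closed surface $Q\subset Z$ of genus two with
\[
Q^2=E^2+P^2=-1+0=-1 .
\]

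\emph{Step 2: lifting $Q$.} Let $\pi:\widetilde Z\to Z$ be the universal double cover; Theorem~\ref{thm:group} says it has exactly two sheets. The preimage $\pi^{-1}(Q)$ is either one connected double cover of $Q$ or two disjoint lifts. I would show the lifts split by proving that $\pi_1(Q)\to\pi_1(Z)\cong\Pi_2/N$ is trivial. The punctured $E$ lies in $A^\circ$, has trivial $\pi_1$, and the sphere $E$ is simply connected. The punctured $P$ lies in $B^\circ$, and by Proposition~\ref{prop:F-complement} the image of $\pi_1(B^\circ)$ dies in the sum. So every loop on $Q$ maps trivially, and $Q$ lifts to two disjoint copies $\widetilde Q_1,\widetilde Q_2$, each with $\widetilde Q_i^2=Q^2=-1$, which is odd. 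Hence the intersection form of $\widetilde Z$ is odd and $w_2(\widetilde Z)\neq0$; this is the statement that $Z$ has $w_2$-type I.

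\emph{Step 3: invariants of the cover.} Multiplicativity gives $e(\widetilde Z)=64$ and $\sigma(\widetilde Z)=0$. Since $\widetilde Z$ is simply connected, $b_2=62$ and $b_2^\pm=31$. Freedman's theorem says a simply connected closed topological $4$-manifold with odd form is classified by its form, together with the Kirby--Siebenmann invariant, which vanishes because $\widetilde Z$ is smooth. An odd indefinite unimodular form is diagonal, so $\widetilde Z$ is homeomorphic to $31\CP^2\#31\overline{\CP}^{\,2}$.

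The main obstacle is Step 2, namely checking that the whole loop group of $Q$, and not only the parts near the summing surfaces, maps trivially. Loops on $Q$ that cross the gluing circle pass through the meridian region. The meridian is trivial on both sides: in $A^\circ$ by Lemma~\ref{lem:dual-sphere}, and in $B^\circ$ its image dies. If the image from $\pi_1(B^\circ)$ caused trouble, I would fall back on a weaker claim: the restriction of the double cover to $Q$ corresponds to a class in $H^1(Q;\Z/2)$, and a connected double cover $\widetilde Q$ would have $\widetilde Q^2=2Q^2=-2$, which is even. This is why the splitting argument is essential.
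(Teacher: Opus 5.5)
Your proposal is correct and follows essentially the same route as the paper. You glue the punctured exceptional sphere to the punctured parallel surface $P$ to obtain a genus-two surface of square $-1$, and you show that its $\pi_1$ maps trivially to $\pi_1(Z)$ because it is generated by loops in the positive complement, whose image dies. It therefore lifts to two disjoint square $-1$ copies in the double cover, and the odd indefinite diagonal form together with Freedman's theorem gives the homeomorphism.
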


\begin{proof}
Remove small disks from the dual sphere $E$ and from $P$ at their intersections with the two summing surfaces.  The two punctured surfaces join to an embedded genus-two surface $S\subset Z$.  Lemma~\ref{lem:framing-sum} gives
\[
S^2=E^2+P^2=-1.
\]
The surface $S$ is obtained from $P\setminus\Int D^2$ by attaching a disk, so $\pi_1(P\setminus\Int D^2)\to\pi_1(S)$ is surjective.  Its map to $Z$ factors through the positive complement, whose image is trivial in $\pi_1(Z)$ by Proposition~\ref{prop:exact-quotient}.  Thus $\pi_1(S)\to\pi_1(Z)$ is trivial.  The inverse image of $S$ in the universal double cover consists of two disjoint components, each mapped diffeomorphically to $S$ and therefore having square $-1$.  Consequently the universal cover is non-spin.

The universal cover is simply connected and, by multiplicativity of Euler characteristic and signature under finite covers, has
\[
e=64,\qquad \sigma=0,\qquad b_2^+=b_2^-=31.
\]
Here $b_1=b_3=0$, so $b_2=62$, and the signature determines the two indices.  The lifted square $-1$ surface shows that the intersection form is odd.  It is also indefinite and unimodular, and therefore is isomorphic to
\[
31\langle1\rangle\oplus31\langle-1\rangle.
\]
The Kirby--Siebenmann invariant vanishes because the cover is smooth.  Freedman's classification theorem now gives the asserted homeomorphism \cite{Freedman}.
\end{proof}

\begin{corollary}\label{cor:exotic-cover}
For every compatible symplectically admissible gluing, the universal double cover $\widetilde Z$ is an exotic symplectic
\[
31\CP^2\#31\overline{\CP}^{\,2}.
\]
\end{corollary}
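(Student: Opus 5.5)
The plan is to assemble the corollary from the results already established, and the real work is the word ``exotic''. Let $\widetilde Z\to Z$ denote the universal cover. By Theorem~\ref{thm:group}, $\pi_1(Z)\cong\Z/2$, so $\widetilde Z$ is a double cover. Proposition~\ref{prop:w2} shows that, for a gluing compatible with the dual surfaces, $\widetilde Z$ is homeomorphic to $31\CP^2\#31\overline{\CP}^{\,2}$. The covering map is a local diffeomorphism, so pulling back the symplectic form of $Z$ gives a symplectic form on $\widetilde Z$. What remains is to show that $\widetilde Z$ is not diffeomorphic to the standard $31\CP^2\#31\overline{\CP}^{\,2}$.

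I would distinguish the two smooth structures by Seiberg--Witten theory, in three steps.
\begin{enumerate}
\item $b_2^+(\widetilde Z)=31>1$, by the computation in Proposition~\ref{prop:w2}. Taubes' theorem then says that the canonical class of the symplectic structure on $\widetilde Z$ has Seiberg--Witten invariant $\pm1$, so $\widetilde Z$ has a nontrivial Seiberg--Witten invariant.
\item The standard $31\CP^2\#31\overline{\CP}^{\,2}$ splits as a connected sum of two pieces, each with $b_2^+>0$. For example, take $\CP^2$ and $30\CP^2\#31\overline{\CP}^{\,2}$. Such a manifold has identically vanishing Seiberg--Witten invariants.
\item The two manifolds are therefore not diffeomorphic, and $\widetilde Z$ is exotic.
\end{enumerate}

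I would also record that $\widetilde Z$ is minimal, since this strengthens the conclusion to smooth irreducibility. A finite unramified cover of a minimal symplectic manifold with $b_2^+>1$ is again minimal. The argument uses the basic classes: if $\widetilde Z$ contained a symplectic $(-1)$-sphere $S$, then $K_{\widetilde Z}=\pi^*K_Z$ would satisfy $K\cdot S=-1$. On the other hand, Taubes' constraints, together with minimality of $Z$ from Theorem~\ref{thm:minimal}, give $K_Z^2=c_1^2(Z)=64>0$. The cleanest route to minimality of $\widetilde Z$ is the known result that, when $b_2^+>1$, a symplectic manifold is minimal if and only if its finite covers are. Alternatively, in the Kodaira-dimension language, a minimal symplectic manifold with $c_1^2>0$ has Kodaira dimension two, and this property is preserved under finite covers.

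I expect the main obstacle to be exactly this minimality and irreducibility upgrade, not the exoticness itself. Exoticness needs only the Taubes versus connected-sum vanishing contrast, which is robust. Irreducibility requires care in passing minimality to the cover, and I would handle it by citing the covering-invariance of minimality for symplectic manifolds with $b_2^+>1$.
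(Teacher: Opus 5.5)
Your steps 1--3 match the paper's proof. You pull back the symplectic form along the unbranched double cover, and Taubes's theorem applies because $b_2^+(\widetilde Z)=31>1$. You contrast this with the vanishing Seiberg--Witten invariants of the standard connected sum, which splits into two pieces with $b_2^+>0$, and you take the homeomorphism from Proposition~\ref{prop:w2}.

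The minimality and irreducibility paragraph is not needed for the corollary, and the paper does not claim it. Your inline sketch comparing $K\cdot S=-1$ with $K_Z^2=64$ does not by itself give a contradiction. If you keep the remark, rest it entirely on the cited result that minimality is invariant under finite covers.
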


\begin{proof}
The covering map $\widetilde Z\to Z$ is finite and unbranched, so the symplectic form on $Z$ pulls back to a symplectic form on $\widetilde Z$.  Since $b_2^+(\widetilde Z)=31$, Taubes's theorem gives a nonzero Seiberg--Witten invariant \cite{Taubes}.  In contrast, the Seiberg--Witten invariants of the standard connected sum
\[
31\CP^2\#31\overline{\CP}^{\,2}
\]
vanish by the connected-sum theorem, since it splits as a connected sum of two manifolds with positive $b_2^+$ \cite{Witten}.  Proposition~\ref{prop:w2} gives the homeomorphism, so the two smooth structures are not diffeomorphic.
\end{proof}

\begin{proposition}\label{prop:rational-equivariant}
Let $\tau$ be the deck involution of the universal cover.  Over $\Q$,
\[
H_2(\widetilde Z;\Q)
\cong \Q_+^{30}\oplus\Q_-^{32},
\]
where the subscripts denote the trivial and sign representations.  The restrictions of the intersection form have signatures $(15,15)$ and $(16,16)$, respectively.
\end{proposition}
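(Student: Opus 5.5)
The plan is to decompose $H_2(\widetilde Z;\Q)$ into its $\tau$-invariant and anti-invariant parts by transfer, and then read off their dimensions and signatures from $Z$ and from the Lefschetz fixed point data of the free involution $\tau$.

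\emph{Step 1 (trivial summand).} Since $\tau$ has order two and we work over $\Q$, $H_2(\widetilde Z;\Q)$ splits as $V_+\oplus V_-$, the $(\pm1)$-eigenspaces of $\tau_*$. The transfer identifies $V_+$ with $H_2(Z;\Q)$, because $H_*(Z;\Q)\cong H_*(\widetilde Z;\Q)^{\tau}$ for a finite regular cover. By the rational computation after Theorem~\ref{thm:group}, $b_2(Z)=30$, so $\dim V_+=30$. The remaining summand then has $\dim V_-=62-30=32$, using $b_2(\widetilde Z)=62$ from Proposition~\ref{prop:w2}.

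\emph{Step 2 (orthogonality).} The two eigenspaces are orthogonal for the intersection form $Q$. Since $\tau$ preserves orientation (it is a deck transformation of an oriented cover), $Q(\tau x,\tau y)=Q(x,y)$. For $x\in V_+$ and $y\in V_-$ this gives $Q(x,y)=-Q(x,y)$, hence $Q(x,y)=0$. The form is nondegenerate over $\Q$, so its restrictions to $V_+$ and $V_-$ are both nondegenerate, and
\[
\sigma(\widetilde Z)=\sigma(Q|_{V_+})+\sigma(Q|_{V_-}).
\]

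\emph{Step 3 (signature on $V_+$).} For $x,y\in V_+$, write $x=\pi^!a$ and $y=\pi^!b$ with transfer classes. Then
\[
Q_{\widetilde Z}(\pi^!a,\pi^!b)=2\,Q_Z(a,b).
\]
So $Q|_{V_+}$ is a positive multiple of $Q_Z$, and its signature equals $\sigma(Z)=0$. With $\dim V_+=30$ this gives type $(15,15)$. Alternatively, $b_2^\pm(Z)=15$ was already recorded.

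\emph{Step 4 (signature on $V_-$).} We have $\sigma(\widetilde Z)=0$, so $\sigma(Q|_{V_-})=0$. With $\dim V_-=32$ this gives type $(16,16)$. As a consistency check, the $G$-signature theorem for a free involution gives $\sigma(\tau,\widetilde Z)=0$. Since $\sigma(\tau,\widetilde Z)=\sigma(V_+)-\sigma(V_-)$, this agrees with Steps 3 and 4. The Lefschetz number is also consistent:
\[
2+30-32=0=\chi(\mathrm{Fix}\,\tau).
\]

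The main point needing care is the transfer identification in Steps 1 and 3, in particular the factor $2$ in the intersection pairing on invariant classes. It follows from
\[
\pi_*\pi^!=2,\qquad \pi^!a\cdot\pi^!b=\pi_*(\pi^!a)\cdot b,
\]
which is the projection formula. Everything else is linear algebra.
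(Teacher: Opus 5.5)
Your proposal is correct and follows essentially the same route as the paper. Transfer identifies the $+1$ eigenspace with $H_2(Z;\Q)$, $\tau$-invariance of $Q$ gives orthogonality, the identity $Q_{\widetilde Z}(p^*u,p^*v)=2Q_Z(u,v)$ gives signature zero on the invariant part, and additivity with $\sigma(\widetilde Z)=0$ handles the anti-invariant part. Your explicit remark that the orthogonal summands inherit nondegeneracy, which is needed to turn dimension and signature into the indices $(15,15)$ and $(16,16)$, makes precise a step the paper leaves implicit; the $G$-signature and Lefschetz checks are correct but not needed.
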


\begin{proof}
Let $p:\widetilde Z\to Z$ be the covering map.  Over $\Q$, transfer gives
\[
p_*p^*=2\operatorname{id},\qquad
p^*p_*=\operatorname{id}+\tau_*.
\]
It follows that $p^*$ identifies $H_2(Z;\Q)$ with the $+1$ eigenspace of $\tau_*$, which has dimension thirty.  Since $b_2(\widetilde Z)=62$, the $-1$ eigenspace has dimension thirty-two.  Invariance of the intersection form gives
\[
Q(\tau_*x,\tau_*y)=Q(x,y).
\]
Thus the two eigenspaces are orthogonal: if $\tau_*x=x$ and $\tau_*y=-y$, then $Q(x,y)=-Q(x,y)$.  Moreover,
\[
Q_{\widetilde Z}(p^*u,p^*v)=2Q_Z(u,v),
\]
so the invariant form has signature zero.  The total signature is zero, and orthogonal additivity gives signature zero on the anti-invariant form.  The stated positive and negative indices follow from the dimensions of the two eigenspaces.
\end{proof}

Under the hypotheses of Proposition~\ref{prop:w2}, the $w_2$-type, the homeomorphism type of the universal cover, and the rational equivariant intersection form are fixed.  A homeomorphism classification of the quotient $Z$ itself would additionally require the integral equivariant intersection form over $\Z[\Z/2]$; its integral extension data are not determined by the rational representation and the ordinary intersection form alone.

\section{The fundamental group of the symplectic sum}\label{sec:subgroup}

The computation of the fundamental group has three parts.  We first identify
the subgroup carried by the nodal curve and determine how the curve lifts to
the relevant covers.  We then study the complement of the positive summing
surface.  Finally, we combine these two calculations by van Kampen's theorem.

\subsection{The nodal-curve subgroup and covering analysis}

Let
\[
A=\operatorname{im}\bigl(\pi_1(\widehat D)\to\Pi\bigr),
\qquad
B=\operatorname{im}\bigl(\pi_1(D)\to\Pi\bigr),
\qquad
N=\ker\bigl(\Pi\to H_1(X;\Z/2)\bigr).
\]
We use the based paths of \cite[Section~2.3]{ASY}.  The two paths from the
base point to the preimages of the node have ambient words
\[
r_1=a_3^3a_1a_2^{-1},
\qquad
r_2=a_2a_1^{-2}a_3^{-1}a_1a_3^{-1}a_1^{-1}a_2^{-2}.
\]
A loop which follows the first path to the node and returns along the second
has word
\[
h=r_1r_2^{-1}.
\]
Changing the common path from the ambient base point conjugates all the words
simultaneously and does not change the subgroup up to conjugacy.

\begin{lemma}\label{lem:based-node}
With these based paths, the image of the nodal curve is
\[
B=\langle p_1,\ldots,p_8,h\rangle.
\]
In particular, the subgroup entered in the finite-index computation is the
geometric subgroup $\operatorname{im}(\pi_1(D)\to\Pi)$.
\end{lemma}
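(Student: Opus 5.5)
The plan is to compute $\pi_1(D)$ for the nodal curve from its normalization, then push the result into $\Pi$ along the based paths $r_1,r_2$.

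\textbf{Step 1: topology of $D$.} The curve $D$ is obtained from its genus-four normalization $\widehat D$ by identifying the two preimages $x_1,x_2$ of the node. Hence $D$ is homotopy equivalent to $\widehat D$ with an arc $\alpha$ attached from $x_1$ to $x_2$, that is, to $\widehat D\vee S^1$ up to homotopy. Fix a base point $x_0\in\widehat D$ and a path $\beta$ in $\widehat D$ from $x_2$ to $x_1$. By van Kampen (an HNN-type extension with trivial associated subgroup),
\[
\pi_1(D,x_0)=\pi_1(\widehat D,x_0)*\langle t\rangle,
\]
where $t$ is the loop that runs to $x_1$, crosses the node to the other branch, and returns from $x_2$ to $x_0$. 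The surface group $\pi_1(\widehat D,x_0)$ is generated by the eight standard generators.

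\textbf{Step 2: images in $\Pi$.} The map $\pi_1(D)\to\Pi$ is induced by the inclusion of $D$ composed with the chosen path from the ambient base point to $x_0$. Under this map the eight surface generators go to the words $p_1,\ldots,p_8$ displayed after \cite[Proposition~2.4]{CKY}, with the based conventions of \cite[Section~2.3]{ASY}. These words generate $A$. For the loop $t$, split it at the node: the segment from $x_0$ to $x_1$ followed by the path back to the base point is represented by $r_1$, and the segment returning through $x_2$ is represented by $r_2^{-1}$. Concatenation then gives $t\mapsto r_1r_2^{-1}=h$. Since the image of a generating set generates the image, this yields $B=\langle p_1,\ldots,p_8,h\rangle$.

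\textbf{Step 3: well-definedness.} Replacing $\beta$ by another path changes $t$ by left and right multiplication with elements of $\pi_1(\widehat D)$. The images of those elements already lie in $A\subset B$, so the subgroup is unchanged. Replacing the common path from the ambient base point conjugates everything simultaneously, as remarked before the lemma. Therefore the subgroup entered in the coset enumeration is the geometric subgroup $\operatorname{im}(\pi_1(D)\to\Pi)$, up to conjugacy.

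\textbf{Main obstacle.} The main difficulty is verifying that $r_1$ and $r_2$ really are the ambient words of paths ending at the two preimages of the same node. They must also use the same base point and base path as the $p_i$; otherwise $h$ would be off by a conjugating element that is not in $B$. To check this, I would trace the identification in the fundamental domain of \cite{CKY}. Concretely, I would confirm that the element $r_1^{-1}gr_2$ relating the two lifts of the node lies in the stabilizer of the node's preimage, where $g$ is the relevant pairing of the branches. This confirms that $h$ closes up to a loop in $D$ through the node.
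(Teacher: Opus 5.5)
Your proposal is correct and follows the paper's proof: you write $\pi_1(D)$ as $\pi_1(\widehat D)$ with one extra free generator from identifying $x_1$ with $x_2$, send that generator to $r_1r_2^{-1}=h$ and the normalization generators to the $p_i$ (which generate $A$), and handle changes of path and base point the same way the paper does. The consistency check you flag as the main obstacle is not carried out in the paper either; there it is part of the hypothesis ``with these based paths,'' supplied by identifying $r_1,r_2$ with the elements $\pi_1,\pi_2$ of \cite[Section~2.3]{ASY}.
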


\begin{proof}
Let $x_1,x_2\in\widehat D$ be the two preimages of the node and choose a base
point $x_0\in\widehat D\setminus\{x_1,x_2\}$.  Choose paths $\gamma_i$ from
$x_0$ to $x_i$ whose images, after adjoining the fixed ambient base path, are
the words $r_i$ above.  These are precisely the two elements denoted
$\pi_1,\pi_2$ in \cite[Section~2.3]{ASY}.  Collapsing a maximal tree containing
$\gamma_1\cup\gamma_2$ shows that identifying $x_1$ with $x_2$ adds one free
generator to $\pi_1(\widehat D)$.  Its image in $\Pi$ is the loop
$\gamma_1\gamma_2^{-1}$ and hence is $r_1r_2^{-1}=h$.

The elements $p_1,\ldots,p_8$ displayed after
\cite[Proposition~2.4]{CKY} generate the stabilizer of the mirror
$b(M_c)$, which is the fundamental group of the normalization.  Therefore
their images generate $A$, and van Kampen gives
$B=\langle A,h\rangle=\langle p_1,\ldots,p_8,h\rangle$.
If the paths $\gamma_i$ are changed while their endpoints are fixed, $h$ is
replaced by $a_1ha_2$ with $a_1,a_2\in A$; this does not change
$\langle A,h\rangle$.  Changing the ambient base path conjugates the entire
subgroup.  Thus the asserted subgroup is independent of the allowed choices,
up to conjugacy.
\end{proof}

\begin{proposition}\label{prop:ABN}
With compatible base paths,
\[
B=\langle A,h\rangle,
\qquad
A\subset B\subset N,
\qquad
\Pi/N\cong(\Z/2)^2.
\]
\end{proposition}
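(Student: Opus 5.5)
The first assertion, $B=\langle A,h\rangle$, is Lemma~\ref{lem:based-node} restated, since $A$ is generated by the images of $p_1,\ldots,p_8$. The inclusion $A\subset B$ is then immediate. The isomorphism $\Pi/N\cong(\Z/2)^2$ was already noted: $N$ is the kernel of the surjection $\Pi\to H_1(X;\Z)\otimes\Z/2\cong H_1(X;\Z/2)\cong(\Z/2)^2$, using $H_1(X;\Z)\cong\Z^2$. So the only real content is $B\subset N$.

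For $B\subset N$, note that $N$ is the kernel of the composite $\Pi\to H_1(X;\Z)\to H_1(X;\Z/2)$. It therefore suffices to show that each generator $p_1,\ldots,p_8,h$ has image in $H_1(X;\Z)\cong\Z^2$ lying in $2\Z^2$. I would fix the abelianization map $\Pi\to\Z^2$ in terms of the ambient generators $a_1,a_2,a_3$ of \cite[Section~2.3]{ASY} and \cite[Section~2.8]{CKY}, writing $a_j\mapsto v_j\in\Z^2$.

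For $p_1,\ldots,p_8$, I would import the homology images computed in \cite[Section~2.8]{CKY} and check that each is even. Alternatively, I would compute the exponent sums of their words in the $a_j$ and reduce modulo two.

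For $h=r_1r_2^{-1}$, I would compute exponent sums directly.
\begin{itemize}
\item $r_1=a_3^3a_1a_2^{-1}$ has exponent vector $(1,-1,3)$ in $(a_1,a_2,a_3)$.
\item $r_2=a_2a_1^{-2}a_3^{-1}a_1a_3^{-1}a_1^{-1}a_2^{-2}$ has exponent vector $(-2,-1,-2)$.
\end{itemize}
Hence $h$ abelianizes to $3v_1+0\,v_2+5v_3$, which modulo two is $v_1+v_3$. This class vanishes in $H_1(X;\Z/2)$ exactly when $v_1+v_3\in2\Z^2$. That parity relation among the $v_j$ is what I would verify from the presentation of $\Pi$ used in \cite{CKY} and \cite{ASY}.

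The main obstacle is the bookkeeping rather than any conceptual step. The generators $a_j$ and the abelianization must be taken in the same conventions as the words $r_i$ and $p_k$, and the base paths must be compatible, since $N$ is normal and conjugation is harmless but a mismatch of generator conventions is not. I would settle this by running the abelianization mechanically on the explicit presentation, for example with the same computer-algebra setup used for the Todd--Coxeter computation in Section~8, checking that every generator of $B$ maps to zero in $(\Z/2)^2$. Independence of the choice of paths follows from Lemma~\ref{lem:based-node}, because replacing $h$ by $a\,h\,a'$ with $a,a'\in A\subset N$ does not affect membership in $N$.
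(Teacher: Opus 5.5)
Your proposal is correct and follows essentially the same route as the paper: $B=\langle A,h\rangle$ from Lemma~\ref{lem:based-node}, $\Pi/N\cong(\Z/2)^2$ from $H_1(X;\Z)\cong\Z^2$, the CKY homology table for $A\subset N$, and a direct abelianization of $h$. The one check you leave open is settled by the CKY values $[a_1]=(1,3)$ and $[a_3]=(-1,-1)$: these give $v_1+v_3=(0,2)$, so $[h]=3v_1+5v_3=(-2,4)\in2\Z^2$, exactly as in the paper.
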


\begin{proof}
Lemma~\ref{lem:based-node} gives $B=\langle A,h\rangle$.  Since
$H_1(X;\Z)\cong\Z^2$, reduction modulo two gives
$\Pi/N\cong(\Z/2)^2$, as noted in Section~4.  The CKY homology table in
\cite[Section~2.8]{CKY} gives
\[
\operatorname{im}\bigl(H_1(\widehat D;\Z)\to H_1(X;\Z)\bigr)=2\Z^2.
\]
Using
\[
[a_1]=(1,3),\qquad [a_2]=(-2,1),\qquad [a_3]=(-1,-1),
\]
one obtains
\[
[r_1]=(0,-1),\qquad [r_2]=(2,-5),\qquad [h]=(-2,4).
\]
The homology table shows that every generator of $A$ lies in $2\Z^2$ and
hence in $N$.  The displayed calculation also gives $h\in N$.  Therefore
$B=\langle A,h\rangle\subset N$.
\end{proof}

The conclusion $B=N$ does not follow formally from Proposition~\ref{prop:ABN}.  It is supplied by the following exact computation.

\begin{proposition}\label{prop:subgroup-index}
The nodal-curve subgroup satisfies
\[
B=N,
\qquad [\Pi:B]=4.
\]
\end{proposition}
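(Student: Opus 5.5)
Since Proposition~\ref{prop:ABN} already gives $B\subset N$ and $[\Pi:N]=4$, the only remaining point is the reverse inequality $[\Pi:B]\le 4$. Equivalently, $N$ must be contained in $B$. I would obtain this by a coset enumeration of $B$ in $\Pi$, which is the exact finite-index computation promised in the outline and recorded in Section~8. The input is a finite presentation of $\Pi=\pi_1(X)$ in the generators $a_1,a_2,a_3$, as used in \cite{CKY,ASY}. The subgroup is specified by the nine words $p_1,\ldots,p_8,h$ of Lemma~\ref{lem:based-node}, each rewritten in these generators. Todd--Coxeter enumeration of the cosets of $\langle p_1,\ldots,p_8,h\rangle$ should close with exactly four cosets. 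That gives $[\Pi:B]=4$, and together with $B\subset N$ and $[\Pi:N]=4$ it forces $B=N$.

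To make the result certifiable rather than dependent on a program's output, I would extract the completed coset table. This is a transitive permutation representation $\rho:\Pi\to S_4$ on four points. The verification then consists of three checks.
\begin{enumerate}
\item Each defining relator of $\Pi$ acts trivially, so $\rho$ is a well-defined action of $\Pi$.
\item Each of $p_1,\ldots,p_8,h$ fixes the coset $B$, so $B$ lies in the point stabilizer.
\item The table is generated by coset definitions obtained from the subgroup generators. Equivalently, a Schreier transversal of four words is given, and each Schreier generator is expressed as a word in $p_1,\ldots,p_8,h$.
\end{enumerate}
The third check shows that the stabilizer is exactly $B$, which gives the upper bound on the index. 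The action should factor through $(\Z/2)^2$ acting regularly. As a consistency check, $a_1,a_2,a_3$ should act via their mod-two classes $(1,1)$, $(0,1)$, $(1,1)$ computed in Proposition~\ref{prop:ABN}.

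A more conceptual alternative is to bound the index from above by showing that $B$ is normal in $\Pi$ with abelian quotient. One would verify that each conjugate $a_j^{\pm1}p_ia_j^{\mp1}$ and each commutator $[a_j,a_k]$ lies in $B$. This still requires solving membership problems in a hyperbolic group, so it is no easier than the enumeration. I would therefore present the enumeration as the proof and use the normality check only as corroboration.

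The main obstacle is the input data, not the enumeration itself. One must transcribe the correct presentation of $\Pi$ and the eight stabilizer generators $p_i$ from \cite[Proposition~2.4]{CKY} into the same generating set. One must also use the branch-joining word $h=r_1r_2^{-1}$ with base paths consistent with those generators, as Lemma~\ref{lem:based-node} requires. Any error in a single word could inflate the index or falsely collapse it. For that reason the relator and Schreier checks should be reproducible independently of the software that performs the enumeration.
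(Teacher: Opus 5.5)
Your proposal is correct and is the paper's argument: Proposition~\ref{prop:ABN} gives $B\subset N$ with $[\Pi:N]=4$, and an exact Todd--Coxeter enumeration supplies $[\Pi:B]=4$. The only difference is where the enumeration is run. The paper works in the ambient CKY group $\Gamma$ with its given presentation, obtains $[\Gamma:B]=3456$, and divides by $[\Gamma:\Pi]=864$ after separately certifying $p_1,\ldots,p_8,h\in\Pi$. You instead enumerate directly in $\Pi$, which first requires a presentation of $\Pi$ and the words $p_i$ (entered in CKY using $j=(uv)^2$) rewritten in $a_1,a_2,a_3$.
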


\begin{proof}
Starting from the CKY presentation of the ambient lattice $\Gamma$, the GAP computation gives
\[
[\Gamma:\Pi]=864.
\]
The CKY words $p_1,\ldots,p_8$ and the branch-joining word $h$ are verified to lie in $\Pi$.  By Lemma~\ref{lem:based-node}, they generate the geometric subgroup $B$.  Two Todd--Coxeter runs, using the coset-table-based and relator-based strategies of the same SymPy implementation, give
\[
[\Gamma:B]=3456.
\]
The previously certified membership $B\subset\Pi$ therefore gives
\[
[\Pi:B]=\frac{3456}{864}=4.
\]
Proposition~\ref{prop:ABN} gives $B\subset N$, while $[\Pi:N]=4$.  Hence $B=N$.  The scripts and recorded output are supplied with the paper.
\end{proof}

\begin{proposition}\label{prop:lift-count}
The fourfold cover associated with $N$ contains four disjoint copies of the nodal curve $D$.  Each intermediate double cover contains two disjoint copies.  Every component maps homeomorphically to $D$, has geometric genus four, one positive node, and self-intersection $-1$.
\end{proposition}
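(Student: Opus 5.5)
The plan is to use covering-space lifting theory for the map from the normalization, together with $B=N$ from Proposition~\ref{prop:subgroup-index}. Let $\nu:\widehat D\to D\subset X$ be the normalization composed with inclusion, and let $\widetilde X\to X$ be the cover corresponding to a subgroup $H\subset\Pi$ containing $N$, where $H=N$ or $H=\Pi_2$. The preimage of $D$ is a union of components. I will count them by the standard correspondence: the components of the preimage of a connected subspace $D$ under a normal cover with deck group $\Pi/H$ are in bijection with $\Pi/(H\cdot B)$, up to the conjugation ambiguity, which is harmless since all these subgroups are normal. Here $B=\operatorname{im}(\pi_1(D)\to\Pi)$ is computed with a base point on $D$, which is exactly the subgroup of Lemma~\ref{lem:based-node}. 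Since $B=N\subset H$, we get $H\cdot B=H$. So the number of components is $[\Pi:H]$, which is $4$ for the fourfold cover and $2$ for each intermediate double cover.

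Next I show that each component maps homeomorphically to $D$. The restriction of the covering to one component $D'$ is a covering $D'\to D$, because restricting a covering to the full preimage of a subspace, and then to a component of that preimage, is again a covering for the reasonable spaces considered here. Its degree is the index of $\operatorname{im}\pi_1(D')$ in $\pi_1(D)$. Equivalently, the degree equals $[\Pi:H]$ divided by the number of components, which is $1$. Alternatively, $\pi_1(D)\to\Pi\to\Pi/H$ is trivial because $B\subset H$. The lifting criterion then applies to the inclusion $D\hookrightarrow X$, since $D$ is a connected, locally path-connected CW complex, so the inclusion lifts to $\widetilde X$. Each lift is an embedding, and the deck translates of one lift give all the components. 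They are disjoint because distinct lifts of the same map through a cover agree nowhere once they differ at one point. This is the main point requiring care: $D$ is singular, so I will use the lifting criterion for CW complexes rather than for manifolds, and I will check that distinct deck translates have disjoint images. Disjointness holds because an intersection point would give two lifts through a common point, which would be equal by uniqueness of lifts on the connected space $D$.

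Finally, I transfer the local and numerical data. The covering map is an unramified local biholomorphism, so each lifted component is a complex curve isomorphic to $D$. It therefore has an ordinary node at the unique preimage of the node, its normalization is $\widehat D$ of genus four, and its two branches are the lifts of the two branches of $D$. This means they lie on the same component and meet positively, as complex branches do. For the self-intersection, I will use local computation: a tubular neighborhood $U$ of $D$, taken to deformation retract onto $D$, lifts homeomorphically to a neighborhood of each component $D'$. The self-intersection is then computed inside this neighborhood, by perturbing $D$ within $U$ and counting intersections including the node contribution. It therefore coincides with $D^2=-1$. As a check, the pullback of $[D]$ is the sum of the $k=[\Pi:H]$ component classes $[D'_j]$. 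These classes are pairwise disjoint, so $(\pi^*[D])^2=k\,D^2=-k$, and this equals $\sum_j [D'_j]^2=k\,[D'_j]^2$ by deck symmetry, which confirms $[D'_j]^2=-1$.
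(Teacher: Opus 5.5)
Your proposal is correct and follows essentially the same route as the paper: you count components by $G\backslash\Pi/B=\Pi/(GB)=\Pi/G$ for a normal $G\supset B$, show each component has degree one, and transfer the node, the genus and the self-intersection through the unramified local biholomorphism (your lifting-criterion and transfer arguments just spell out what the paper states briefly). Your argument only uses the containment $B\subset N$, and the paper likewise takes this from the homology calculation in Proposition~\ref{prop:ABN} rather than from the computer-certified equality $B=N$.
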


\begin{proof}
For a normal cover corresponding to $G\triangleleft\Pi$, the components of the inverse image of $D$ are indexed by the double cosets
\[
G\backslash\Pi/B.
\]
The degree of the component represented by $g\in\Pi$ is
\[
[B:B\cap g^{-1}Gg].
\]
Proposition~\ref{prop:ABN} gives $B\subset N\subset G$ for $G=N$ and for every intermediate group $G=\Pi_2$.  These groups are normal because they are inverse images of subgroups of the abelian quotient $\Pi/N$.  Thus $B\subset g^{-1}Gg=G$, and the displayed degree is one.  Every component therefore maps homeomorphically to $D$.  The double-coset set reduces to $G\backslash\Pi$, so the number of components is $[\Pi:G]$, namely four for $G=N$ and two for $G=\Pi_2$.  Because the cover is unramified and locally biholomorphic, it preserves the node, its positive local intersection sign, and the normal Euler number.  Hence every component has one positive node, geometric genus four, and square $-1$.
\end{proof}

Thus the component count asserted in the earlier construction is correct.  By Proposition~\ref{prop:subgroup-index}, the image of the fundamental group of every lifted component is a conjugate of $N$.

\subsection{The positive surface complement}

The manifold $X_{2,3}$ contains square-zero symplectic surfaces of genera two and three meeting transversely once.  Resolving their intersection gives a symplectic surface
\[
F\subset X_{2,3},\qquad g(F)=5,\qquad F^2=2.
\]
Put $C=X_{2,3}\setminus\Int\nu F$ and let $\mu$ denote a meridian.

\begin{proposition}\label{prop:F-complement}
The complement satisfies
\[
H_1(C;\Z)=0.
\]
Its fundamental group is perfect and normally generated by $\mu$.
\end{proposition}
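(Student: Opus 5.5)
The plan is to use the geometric dual $P$ from Lemma~\ref{lem:positive-input}, together with the simple connectivity of $X_{2,3}$. In outline: $P$ kills the meridian after filling, so normal generation by $\mu$ comes from van Kampen, and homology follows from the Mayer--Vietoris/exact sequence of the pair.

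First, normal generation. Write
\[
X_{2,3}=C\cup_{\partial}\nu F,
\]
where $\partial\nu F$ is a circle bundle over the genus-five surface $F$. Apply van Kampen to this decomposition. Since $\pi_1(\nu F)\cong\pi_1(F)$ and $\pi_1(\partial\nu F)\to\pi_1(\nu F)$ is surjective with kernel normally generated by the fiber $\mu$, we get
\[
\pi_1(X_{2,3})\cong\pi_1(C)/\langle\!\langle\mu\rangle\!\rangle.
\]
More precisely: gluing $\nu F$ to $C$ amounts, up to homotopy, to attaching a $2$-cell along $\mu$ (the meridian disk) and then cells of dimension $\ge 3$ along the remainder of $F\times D^2$. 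Those higher cells do not affect $\pi_1$. Because $X_{2,3}$ is simply connected, $\pi_1(C)$ is normally generated by $\mu$.

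Second, $H_1(C;\Z)=0$. By normal generation, $H_1(C;\Z)$ is cyclic, generated by $[\mu]$. It remains to show $[\mu]=0$, for which I would use the dual surface $P$. The punctured surface
\[
P^\circ=P\setminus\Int\nu F\cap P
\]
lies in $C$, and by $P\cdot F=1$ its boundary is a single meridian circle, a copy of $\mu$ up to orientation. Hence $\mu$ bounds the surface $P^\circ$ in $C$, so $[\mu]=0$ in $H_1(C;\Z)$. Alternatively, one can use the exact sequence of $(X_{2,3},C)$ as in Lemma~\ref{lem:exterior}:
\[
H_2(X_{2,3})\xrightarrow{\ \cdot[F]\ }\Z\to H_1(C)\to H_1(X_{2,3})=0.
\]
Here the first map is surjective because $P\cdot F=1$, so $H_1(C)=0$. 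Finally, perfectness is just the statement $H_1(C;\Z)=\pi_1(C)^{ab}=0$.

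The main obstacle I expect is a point of rigor rather than computation: checking that $P$ meets $F$ in exactly one transverse positive point. This is needed so that the punctured surface has a single boundary component isotopic to a fiber of $\partial\nu F$. Lemma~\ref{lem:positive-input} already provides the local model in which $P=\{z=\delta\}$ meets the resolution $\{zw=\varepsilon\}$ exactly once. Taking $\nu F$ small relative to $|\delta|$ keeps the intersection transverse, and the boundary of the deleted disk in $P$ is then a normal fiber circle. The homological argument needs only the algebraic count $P\cdot F=1$, so it is robust even if some care about geometric intersections is required.
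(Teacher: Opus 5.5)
Your proof is correct. The $\pi_1$ half is the same as the paper's: van Kampen for $X_{2,3}=C\cup\nu F$ gives $\pi_1(X_{2,3})\cong\pi_1(C)/\langle\!\langle\mu\rangle\!\rangle$, so simple connectivity forces normal generation by $\mu$.

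The difference is in how you show $H_1(C;\Z)=0$. The paper uses the same pair sequence $H_2(X_{2,3})\to\Z\to H_1(C)\to 0$, but proves surjectivity of $x\mapsto x\cdot[F]$ intrinsically. The class $[F]$ is primitive, because $F^2=2$ is not divisible by $k^2$ for any $k>1$. A primitive class in the unimodular lattice of a simply connected manifold then has an algebraic dual. You instead take the dual to be the explicit surface $P$ with $P\cdot F=1$ from Lemma~\ref{lem:positive-input}. You also give a geometric version: normal generation makes $H_1(C;\Z)$ cyclic on $[\mu]$, and $\mu$ bounds the punctured genus-two surface $P\setminus\Int\nu F\subset C$.

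The paper's route needs nothing about $P$, in particular not the one-point geometric intersection claim in Lemma~\ref{lem:positive-input}, and it applies to any primitive class. Your route avoids the lattice-theoretic input and exhibits the null-homology of $\mu$ concretely. It also uses exactly the piece of $P$ that reappears in the $w_2$ argument of Proposition~\ref{prop:w2}. As you note, the algebraic form of your argument needs only $P\cdot F=1$, so the geometric-intersection worry is not a real obstacle. Both arguments show only that $\mu$ is nullhomologous, not nullhomotopic, in $C$, consistent with the paper's remark that $\pi_1(C)$ need not be trivial.
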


\begin{proof}
The pair sequence and the Thom isomorphism give
\[
H_2(X_{2,3};\Z)\xrightarrow{x\mapsto x\cdot[F]}
\Z\longrightarrow H_1(C;\Z)\longrightarrow0.
\]
The class $[F]$ is primitive.  Indeed, if $[F]=kA$ for $k>1$, then
\[
2=F^2=k^2A^2,
\]
which is impossible in an integral lattice.  Since $X_{2,3}$ is simply connected, its intersection lattice is unimodular, and the intersection map is surjective.  Hence $H_1(C;\Z)=0$.

Restoring the tubular neighborhood kills the normal closure of the meridian, so van Kampen gives
\[
\pi_1(X_{2,3})
\cong\pi_1(C)/\langle\!\langle\mu\rangle\!\rangle.
\]
The closed manifold is simply connected.  Therefore $\pi_1(C)$ is normally generated by $\mu$, and it is perfect because its abelianization vanishes.
\end{proof}

The proposition does not prove that $C$ is simply connected.  Nontrivial perfect groups of weight one exist.  Absent a geometric argument killing the meridian after gluing, an explicit relative presentation would be needed.

\subsection{The exact surface-sum quotient}

Let
\[
A^\circ=(X_2\#\overline{\CP}^{\,2})\setminus\Int\nu\Sigma_-,
\qquad B^\circ=X_{2,3}\setminus\Int\nu F,
\]
where $\Sigma_-^2=-2$.  The boundary is the circle bundle over $\Sigma_-$ with Euler number two.  With a fixed orientation convention its group has presentation
\[
\left\langle \alpha_i,\beta_i,\mu\ \middle|\
[\mu,\alpha_i]=[\mu,\beta_i]=1,
\prod_{i=1}^5[\alpha_i,\beta_i]=\mu^2
\right\rangle.
\]
A fiber-reversing gluing sends one meridian to the inverse of the other.  The surface lifts may also acquire meridional powers determined by a class in $H^1(\Sigma_-;\Z)$.

Let $G_A=\pi_1(A^\circ)$, $G_B=\pi_1(B^\circ)$, and let $P$ denote the fundamental group of the sum.  For a fixed gluing $\Phi$,
\[
P=(G_A*G_B)/
\langle\!\langle i_A(y)=i_B(\Phi_*y),\ y\in\pi_1(\partial A^\circ)\rangle\!\rangle.
\]
Let $H\subset\Pi_2$ be the image of the smoothed negative-square surface after its neighborhood is restored and put
\[
K=\langle\!\langle H\rangle\!\rangle_{\Pi_2}.
\]

There are three steps in the quotient calculation.  First, the exceptional
sphere on the negative side kills the meridian and identifies $G_A$ with
$\Pi_2$.  Second, the positive group is normally generated by its meridian,
so the gluing makes its entire image trivial.  Third, the surface loops from
the negative boundary impose the normal closure of $H$.  Thus van Kampen
reduces the surface sum to the single quotient
\[
\Pi_2\longrightarrow \Pi_2/\langle\!\langle H\rangle\!\rangle.
\]
The remaining task is to identify $H$ with the subgroup carried by the nodal
curve.

The exceptional sphere kills the meridian on the negative side.

\begin{lemma}\label{lem:dual-sphere}
The exceptional sphere $E$ meets $\Sigma_-$ transversely in one positive point.  Consequently the meridian of $\Sigma_-$ is nullhomotopic in $A^\circ$.
\end{lemma}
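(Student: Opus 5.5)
The plan has two parts: first verify the local transversality of $E$ and $\Sigma_-$, then turn it into a disk that the meridian bounds.

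\emph{Transversality.} Since $q$ is a regular point of $\overline D$ chosen away from the smoothing chart, there are holomorphic (or compatible Darboux) coordinates $(z_1,z_2)$ near $q$ in which $\overline D=\{z_2=0\}$ and $q=0$. In the standard blowup chart with coordinates $(u,v)\mapsto(u,uv)$, the exceptional sphere is $E=\{u=0\}$ and the proper transform of $\{z_2=0\}$ is $\{v=0\}$. These two complex lines meet exactly once, at $(0,0)$, transversely and with positive complex orientation. Any other intersection of $E$ with $\Sigma_-$ would lie over $q$ on another branch of $\overline D$, and there is none because $\overline D$ is embedded. This agrees with the homological computation $E\cdot[\Sigma_-]=E\cdot(p^*[\overline D]-E)=1$ already recorded.

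\emph{Killing the meridian.} Take $\nu\Sigma_-$ small enough that $E\cap\nu\Sigma_-$ is a single normal disk fiber $\delta$ over the intersection point; transversality allows this by choosing the tubular neighborhood compatibly in the chart above. Then $E^\circ=E\setminus\Int\delta$ is a disk embedded in $A^\circ$, and its boundary $\partial\delta$ is a meridian circle of $\Sigma_-$. The meridian therefore bounds an embedded disk in $A^\circ$ and is nullhomotopic. Since all meridians are conjugate once paths to the base point are fixed, every based meridian is trivial in $\pi_1(A^\circ)$.

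\emph{The isomorphism $\pi_1(A^\circ)\cong\Pi_2$.} Van Kampen gives $\pi_1(X_2\#\overline{\CP}^{\,2})\cong\pi_1(A^\circ)/\langle\!\langle\mu\rangle\!\rangle=\pi_1(A^\circ)$. The blowup does not change $\pi_1$, so this group is $\Pi_2$, which is the form used in Theorem~\ref{thm:group}. The only step needing care is the compatibility between the tubular neighborhood and the blowup chart, so that $E$ cuts $\nu\Sigma_-$ in exactly one fiber disk. This is standard, and positivity and transversality are already visible in the complex model.
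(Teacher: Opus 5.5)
Your proposal is correct and follows essentially the same route as the paper: a local holomorphic blowup chart in which $E=\{u=0\}$ and $\Sigma_-=\{v=0\}$ meet once, transversely and positively, after which the punctured exceptional sphere is a disk in $A^\circ$ whose boundary is the meridian. Your extra van Kampen step giving $\pi_1(A^\circ)\cong\Pi_2$ is the same codimension-two observation that the paper states immediately after the lemma.
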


\begin{proof}
The surface $\Sigma_-$ is defined by first smoothing the positive node and then blowing up a regular point $q$ of the smoothed surface.  Choose an almost-complex structure compatible with the local symplectic form and making $\overline D$ holomorphic near $q$.  In standard complex coordinates $(u,v)$ centered at $q$, write $\overline D=\{v=0\}$.  In the blowup chart
\[
(u,v)=(s,st),
\]
the exceptional sphere is $\{s=0\}$ and the proper transform $\Sigma_-$ is $\{t=0\}$.  They therefore meet once, transversely and positively.  The punctured exceptional sphere $E\setminus\Int\nu\Sigma_-$ is a properly embedded disk in $A^\circ$ whose boundary is the oriented circle fiber.  Hence the meridian is nullhomotopic.
\end{proof}

For any connected codimension-two submanifold, the kernel of the homomorphism from the complement group to the ambient group is normally generated by a meridian.  Lemma~\ref{lem:dual-sphere} therefore gives
\[
\pi_1(A^\circ)\cong\Pi_2.
\]
On the positive side, $X_{2,3}$ is simply connected, so $G_B$ is normally generated by its meridian.  The normal-bundle gluing identifies that meridian with the inverse meridian on the negative side.  Hence the entire image of $G_B$ dies in the sum.

\begin{proposition}\label{prop:exact-quotient}
For every fiber-reversing normal-bundle gluing,
\[
P\cong\Pi_2/K.
\]
\end{proposition}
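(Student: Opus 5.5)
The argument is a direct van Kampen computation on the presentation
\[
P=(G_A*G_B)/\langle\!\langle i_A(y)=i_B(\Phi_*y),\ y\in\pi_1(\partial A^\circ)\rangle\!\rangle .
\]
I will build explicit homomorphisms $P\to\Pi_2/K$ and $\Pi_2/K\to P$ and check that they are mutually inverse.

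\textbf{Step 1 (negative side).} By Lemma~\ref{lem:dual-sphere} and the meridian-kernel fact, inclusion gives $G_A\cong\Pi_2$, and in $G_A$ the meridian $\mu_A$ is trivial. Since $\Sigma_-$ is the proper transform of $\overline D$ away from $E$, the boundary surface loops $\alpha_i,\beta_i$ (for any choice of section) map into $G_A$ onto the subgroup $H$. A different section multiplies these loops by powers of $\mu_A$, so the image does not depend on the choice. This is exactly why meridional twists in $\Phi$ are harmless.

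\textbf{Step 2 (map $P\to\Pi_2/K$).} I define the map on $G_A$ by $\Pi_2\to\Pi_2/K$ and send $G_B$ to the identity. I then check that every gluing relation holds. For the meridian, $\Phi_*$ sends $\mu_A$ to $\mu_B^{-1}$ times surface-independent data, and both sides go to $1$. For a surface loop $y$, $i_A(y)\in H\subset K$ maps to $1$, and $i_B(\Phi_*y)$ also maps to $1$. So the map is well defined and clearly surjective.

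\textbf{Step 3 (map $\Pi_2/K\to P$).} I define it via $G_A\to P$ and need $K$ to die in $P$. The meridian $\mu_B$ is identified with $\mu_A^{\mp1}$, which is $1$ in $G_A$, so $\mu_B=1$ in $P$. By Proposition~\ref{prop:F-complement}, $G_B$ is normally generated by $\mu_B$, hence the whole image of $G_B$ in $P$ is trivial. Each boundary surface loop $y$ then satisfies $i_A(y)=i_B(\Phi_*y)=1$ in $P$. Therefore $H$, and so its normal closure $K$, is killed, and the map $\Pi_2/K\to P$ is well defined.

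\textbf{Step 4 (inverses).} $P$ is generated by the images of $G_A$ and $G_B$, and $G_B$ dies, so $G_A\to P$ is surjective. The two compositions are the identity on the generators coming from $G_A$, hence the maps are inverse isomorphisms.

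The main obstacle is the bookkeeping in Step 1: the image of $\pi_1(\partial A^\circ)$ in $G_A$ must be exactly $H$, with $\mu$ contributing nothing, for an arbitrary gluing including twists and base-path changes. I would handle this by noting that $\pi_1(\partial A^\circ)\to\pi_1(\Sigma_-)$ is surjective with kernel generated by $\mu$. The image in $\Pi_2$ therefore equals the image of $\pi_1(\Sigma_-)$, namely $H$, up to conjugation by the base path, which does not change $K$.
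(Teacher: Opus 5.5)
Your proposal is correct and follows the paper's argument. In both, the exceptional sphere kills $\mu_A$, so $G_A\cong\Pi_2$; the meridian identification together with Proposition~\ref{prop:F-complement} kills all of $G_B$; and the surviving gluing relations impose exactly $\langle\!\langle H\rangle\!\rangle_{\Pi_2}$. The difference is only in presentation: you write down the two mutually inverse homomorphisms and use the circle-bundle sequence to show the boundary image is $H$, whereas the paper just asserts that the remaining relations are the normal closure of $H$.
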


\begin{proof}
Write $\mu_A$ and $\mu_B$ for the two meridians.  Lemma~\ref{lem:dual-sphere} gives $i_A(\mu_A)=1$ in $G_A$.  A fiber-reversing gluing has
\[
\Phi_*(\mu_A)=\mu_B^{-1},
\]
up to the fixed orientation convention.  The corresponding amalgamation relation therefore gives $i_B(\mu_B)=1$ in $P$.  By Proposition~\ref{prop:F-complement}, $G_B$ is normally generated by $\mu_B$, so the homomorphism $G_B\to P$ is trivial.

After $G_B$ has died, every boundary loop parallel to the surface on the positive side becomes trivial.  On the negative side these loops generate the image of the fundamental group $H$ of the restored surface.  A general normal-bundle gluing may replace a chosen surface lift by that lift times a power of $\mu_B$.  These additional powers have already become trivial, so the conclusion is independent of the class in $H^1(\Sigma_-;\Z)$ specifying the gluing twist.  Hence the only remaining relations in $G_A\cong\Pi_2$ are the normal closure of $H$, and
\[
P\cong \Pi_2/\langle\!\langle H\rangle\!\rangle_{\Pi_2}=\Pi_2/K.
\]
\end{proof}

\begin{lemma}\label{lem:smoothing-image}
Let $B=\operatorname{im}(\pi_1(D)\to\Pi)$ be the nodal-curve subgroup defined in Section~\ref{sec:subgroup}.  Up to conjugacy, the image $H$ of $\pi_1(\Sigma_-)$ in $\Pi_2$ is $B$.  Consequently
\[
K=\langle\!\langle B\rangle\!\rangle_{\Pi_2}.
\]
\end{lemma}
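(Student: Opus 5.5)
The plan is to follow the surface through the three operations (lifting to $X_2$, smoothing the node, blowing up a regular point) and show that each one leaves the image in $\Pi_2$ unchanged up to conjugacy.

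\emph{Lifting.} By Proposition~\ref{prop:lift-count}, the chosen component $D_2\subset X_2$ maps homeomorphically onto $D$. Hence $\pi_1(D_2)\to\Pi_2\subset\Pi$ has image a conjugate of $B$; with compatible base points it is exactly $B$. Since $B\subset N\subset\Pi_2$ by Proposition~\ref{prop:ABN}, this subgroup lies in $\Pi_2$.

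\emph{Smoothing.} I will compare $\pi_1(\overline D)$ with $\pi_1(D_2)$ using a map rather than an isomorphism. Inside the ball $U$ about the node, $\overline D\cap U$ is the annulus $\{z_1z_2=\varepsilon\}$. Collapsing its core circle to a point gives a map $\overline D\to D_2$ that is the identity outside $U$ and carries the annulus onto the two branch disks. The inclusion $\overline D\hookrightarrow X_2$ is homotopic to this collapse followed by the inclusion of $D_2$, because both restrict to maps into the contractible ball $U$ that agree on $\partial U\cap\overline D$. The collapse is surjective on $\pi_1$: $D_2$ is obtained from $\overline D$ by pinching a circle, and by van Kampen that adds only the relation killing the core circle. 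The core circle bounds a disk in $U$, so it is already trivial in $\Pi_2$, and therefore
\[
\operatorname{im}\pi_1(\overline D)=\operatorname{im}\pi_1(D_2).
\]
Concretely, the handle added by the smoothing has two generators. One is the vanishing cycle, which dies. The other is a loop passing from branch to branch through the annulus; its ambient word is exactly the branch-joining element $h$ of Lemma~\ref{lem:based-node}.

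\emph{Blowing up.} The blowup at $q$ changes nothing here. The proper transform $\Sigma_-$ maps diffeomorphically onto $\overline D$ under the blowdown $p$. Moreover $p$ induces an isomorphism $\pi_1(X_2\#\overline{\CP}^{\,2})\to\Pi_2$, since connected sum with the simply connected $\overline{\CP}^{\,2}$ does not change $\pi_1$ in dimension four. Hence $H=\operatorname{im}\pi_1(\Sigma_-)$ equals $\operatorname{im}\pi_1(\overline D)$, which by the previous step is $B$ up to conjugacy. The restored-neighborhood convention does not matter, because $\nu\Sigma_-$ deformation retracts onto $\Sigma_-$.

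\emph{Normal closures.} Since $B\subset\Pi_2$, the normal closure in $\Pi_2$ of any $\Pi_2$-conjugate of $B$ equals $\langle\!\langle B\rangle\!\rangle_{\Pi_2}$. A conjugate of $B$ by an element of $\Pi\setminus\Pi_2$ also causes no trouble: $B$ is conjugate to a subgroup of $N$, and $N$ is normal in $\Pi$, so the normal closure is still $\langle\!\langle B\rangle\!\rangle_{\Pi_2}=N$ once Proposition~\ref{prop:subgroup-index} is applied. This gives $K=\langle\!\langle B\rangle\!\rangle_{\Pi_2}$.

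I expect the main obstacle to be the smoothing step. Specifically, one must check carefully that the collapse map $\overline D\to D_2$ is $\pi_1$-surjective and that the loop through the new handle maps to $h$ rather than to some other element $a_1ha_2$ with $a_1,a_2\in A$. The latter ambiguity is harmless by the independence argument at the end of Lemma~\ref{lem:based-node}.
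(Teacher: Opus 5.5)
Your proposal is correct and follows essentially the paper's argument. The smoothing only reopens the vanishing cycle, which bounds a disk in the coordinate ball $U$; the cross-handle loop accounts for $h$; and the blowup changes neither the surface nor the ambient group. Your collapse map $\overline D\to D_2$, homotopic to the inclusion relative to $\overline D\setminus U$, is a slightly cleaner packaging than the paper's generator count: it compares directly with $\operatorname{im}\pi_1(D_2)$, so the $h$ versus $a_1ha_2$ worry you flag at the end does not actually arise in your own argument.
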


\begin{proof}
Choose a small ball $U$ about the node in which the two branches are the coordinate disks $\{uv=0\}$.  Replacing them by $\{uv=\varepsilon\}$ attaches a one-handle to the normalization with small disks removed at the two preimages of the node.  The old genus-four part supplies the subgroup $A$.

Choose an arc in the punctured normalization joining the two attaching circles.  Closing this arc through the core of the smoothing annulus gives one generator of the new handle.  Its image in the ambient manifold follows one branch to the node and returns along the other; with the base paths used to define $r_1$ and $r_2$, this is $r_1r_2^{-1}=h$.  A dual generator may be chosen as the waist circle of the smoothing annulus.  It bounds a disk in the coordinate ball $U$, and hence maps trivially to the ambient fundamental group.  Van Kampen applied to the smoothed surface therefore gives
\[
\operatorname{im}\bigl(\pi_1(\overline D)\to\Pi_2\bigr)
=\langle A,h\rangle=B,
\]
up to conjugacy caused by the choice of base path.  Blowing up the regular point $q$ does not change the ambient fundamental group, and the proper transform is diffeomorphic to $\overline D$.  Its image is therefore still $B$.
\end{proof}

Proposition~\ref{prop:subgroup-index} gives $B=N$, and Lemma~\ref{lem:smoothing-image} therefore gives $K=N$.  It follows from Proposition~\ref{prop:exact-quotient} that
\[
P\cong\Pi_2/N\cong\Z/2.
\]
This proves Theorem~\ref{thm:group}.  No separate computation of the full group of $X_{2,3}\setminus\nu F$ is necessary.

Abelianization gives $H_1(Z;\Z)=\Z/2$.  Since $Z$ is oriented, Poincar\'e duality and the universal coefficient theorem give
\[
H_3(Z;\Z)\cong H^1(Z;\Z)=0.
\]
The free rank of $H_2(Z;\Z)$ is therefore determined by
\[
32=e(Z)=1-0+b_2-0+1,
\]
so it equals thirty.  Poincar\'e duality identifies $H_2(Z;\Z)$ with
$H^2(Z;\Z)$.  The universal coefficient sequence
\[
0\longrightarrow \operatorname{Ext}(H_1(Z;\Z),\Z)
\longrightarrow H^2(Z;\Z)
\longrightarrow \operatorname{Hom}(H_2(Z;\Z),\Z)
\longrightarrow0
\]
therefore identifies the torsion subgroup of $H_2(Z;\Z)$ with
$\operatorname{Ext}(\Z/2,\Z)\cong\Z/2$.  Consequently
\[
H_2(Z;\Z)=\Z^{30}\oplus\Z/2.
\]
Moreover, $b_2^+(Z)=b_2^-(Z)=15$.  Its universal double cover has
\[
e=64,\qquad \sigma=0,\qquad b_2^+=b_2^-=31.
\]
For a compatible gluing, Proposition~\ref{prop:w2} determines the parity and the $w_2$-type.  The equivariant intersection form of the quotient is not computed here, so no homeomorphism classification of $Z$ itself is asserted.

\section{A signature \texorpdfstring{$-1$}{-1} example with Klein four fundamental group}

The construction has three natural levels.  The fourfold cover used in
\cite[Section~5.3, pp.~44--45]{ASY} gives the simply connected exotic
manifold $M_{2,10}$, homeomorphic to
$19\CP^2\#17\overline{\CP}^{\,2}$ and having signature two.  The
intermediate double cover gives the signature-zero manifold with fundamental
group $\Z/2$ studied above.  We now apply the same argument to the original
Cartwright--Steger surface.

Smooth the positive node of $D\subset X$ and denote the resulting embedded genus-five symplectic surface by $\overline D_V$.  It represents the same homology class as $D$, and hence
\[
[\overline D_V]^2=-1.
\]
Blow up a regular point of $\overline D_V$.  Its proper transform is a symplectic surface
\[
\Sigma_V\subset X\#\overline{\CP}^{\,2},
\qquad g(\Sigma_V)=5,\qquad \Sigma_V^2=-2.
\]
As before, the exceptional sphere is a geometric dual to $\Sigma_V$.  Using the square $+2$ surface $F\subset X_{2,3}$, define
\[
Z_V=(X\#\overline{\CP}^{\,2})
\mathbin{\#_{\Sigma_V=F}}X_{2,3}.
\]

\begin{theorem}\label{thm:klein-four}
For every symplectically admissible fiber-reversing normal-bundle gluing,
$Z_V$ is a minimal symplectic four-manifold satisfying
\[
\pi_1(Z_V)\cong(\Z/2)^2
\]
and
\[
(e(Z_V),\sigma(Z_V),\chi_h(Z_V),c_1^2(Z_V))=(29,-1,7,55).
\]
Moreover,
\[
H_1(Z_V;\Z)\cong(\Z/2)^2,
\qquad
H_2(Z_V;\Z)\cong\Z^{27}\oplus(\Z/2)^2.
\]
\end{theorem}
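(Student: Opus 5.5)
The proof will follow the $\Z/2$ case essentially step by step, with $\Pi$ in place of $\Pi_2$ and $X$ in place of $X_2$.

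\emph{Numerical invariants.} We have $e(X)=3$ and $\sigma(X)=1$. After the blowup, $e(X\#\overline{\CP}^{\,2})=4$ and $\sigma(X\#\overline{\CP}^{\,2})=0$. Lemma~\ref{lem:positive-input} gives $e(X_{2,3})=9$ and $\sigma(X_{2,3})=-1$, and $e(F)=-8$. The fiber-sum formulas then give
\[
e(Z_V)=4+9+16=29,\qquad \sigma(Z_V)=-1,
\]
so $\chi_h=(29-1)/4=7$ and $c_1^2=58-3=55$.

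\emph{Minimality.} I would repeat the proof of Theorem~\ref{thm:minimal}. Since $X$ is aspherical, every embedded sphere of square $-1$ in $X\#\overline{\CP}^{\,2}$ represents $\pm E$, and $E\cdot[\Sigma_V]=1\neq0$. The block $X_{2,3}$ is minimal. The exceptional ruled case of Usher's theorem is excluded because the summands have signatures $0$ and $-1$, while the summing surface has genus five. So the ruled exclusion has to be argued slightly differently here: a ruled summand over a genus-five surface with the summing surface as a section would be an $S^2$-bundle over a surface of genus five. Neither $X\#\overline{\CP}^{\,2}$ (which has $b_1=2$) nor $X_{2,3}$ (which is simply connected and has $e=9$) is such a bundle, since the bundle would have $b_1=10$. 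Usher's theorem \cite[Theorem~1.1]{Usher} then gives minimality.

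\emph{Fundamental group.} The exceptional sphere is a geometric dual to $\Sigma_V$. The argument of Lemma~\ref{lem:dual-sphere} therefore gives
\[
\pi_1\bigl((X\#\overline{\CP}^{\,2})\setminus\Int\nu\Sigma_V\bigr)\cong\Pi .
\]
Proposition~\ref{prop:F-complement} says the positive complement is normally generated by its meridian, so its whole image dies in the sum. The proof of Proposition~\ref{prop:exact-quotient} then applies verbatim and gives $\pi_1(Z_V)\cong\Pi/\langle\!\langle H\rangle\!\rangle$. Here $H$ is the image of $\pi_1(\Sigma_V)$. By the proof of Lemma~\ref{lem:smoothing-image}, run directly in $X$, $H$ is conjugate to $B=\langle A,h\rangle$. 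Proposition~\ref{prop:subgroup-index} gives $B=N$, which is normal in $\Pi$. Hence
\[
\pi_1(Z_V)\cong\Pi/N\cong(\Z/2)^2 .
\]
The key input is again the Todd--Coxeter identification $B=N$; it is already established, so nothing new is needed there. The one point requiring care is that Lemma~\ref{lem:smoothing-image} was phrased in $\Pi_2$, whereas here the curve is not lifted, so the same handle analysis must be applied directly in $\Pi$.

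\emph{Homology.} Abelianizing gives $H_1(Z_V;\Z)\cong(\Z/2)^2$. Then $b_1=0$, so $H_3(Z_V;\Z)\cong H^1(Z_V;\Z)=0$, and $29=1+b_2+1$ gives $b_2=27$. The torsion of $H_2(Z_V;\Z)\cong H^2(Z_V;\Z)$ is
\[
\operatorname{Ext}\bigl((\Z/2)^2,\Z\bigr)\cong(\Z/2)^2 .
\]
This yields $H_2(Z_V;\Z)\cong\Z^{27}\oplus(\Z/2)^2$.

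The only step I expect to need real attention is the ruled-summand exclusion in Usher's theorem, since the earlier signature argument no longer applies; the Betti-number argument above is what I would use.
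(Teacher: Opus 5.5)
Your proposal is correct and follows the paper's proof essentially step by step: the dual exceptional sphere identifies the negative complement group with $\Pi$, the positive complement dies through its meridian, the smoothing lemma is run directly in $\Pi$ together with $B=N$, and the Euler characteristic, signature and $\operatorname{Ext}$ computations are the same. The one deviation is the ruled exception in Usher's theorem: the paper excludes the negative block by the cohomological dimension of $\Pi$ and the positive block by signature, whereas your observation that an $S^2$-bundle with a genus-five section has $b_1=10$ excludes both blocks at once and is equally valid.
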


\begin{proof}
Let
\[
A_V^\circ=(X\#\overline{\CP}^{\,2})\setminus\Int\nu\Sigma_V.
\]
The exceptional sphere meets $\Sigma_V$ once.  Its punctured complement is therefore a disk in $A_V^\circ$ bounded by the meridian of $\Sigma_V$.  The meridian is nullhomotopic, and the standard codimension-two complement argument gives
\[
\pi_1(A_V^\circ)\cong\pi_1(X)=\Pi.
\]
On the positive side, Proposition~\ref{prop:F-complement} shows that the group of $X_{2,3}\setminus\Int\nu F$ is normally generated by its meridian.  The gluing identifies this meridian with the inverse of the trivial meridian on the negative side, so the image of the entire positive complement group is trivial in the sum.

The local smoothing argument in Lemma~\ref{lem:smoothing-image} applies without passing to a cover.  The image of $\pi_1(\Sigma_V)$ in $\Pi$ is the nodal-curve subgroup $B$.  Consequently van Kampen's theorem gives
\[
\pi_1(Z_V)
\cong\Pi/\langle\!\langle B\rangle\!\rangle_\Pi
=\Pi/N
\cong(\Z/2)^2,
\]
where Proposition~\ref{prop:subgroup-index} gives $B=N$.  As in Proposition~\ref{prop:exact-quotient}, meridional powers introduced by a normal-bundle twist are already trivial, so the quotient is independent of the gluing class.

The negative block has
\[
e(X\#\overline{\CP}^{\,2})=4,
\qquad
\sigma(X\#\overline{\CP}^{\,2})=0.
\]
Using $e(X_{2,3})=9$, $\sigma(X_{2,3})=-1$, and $e(\Sigma_5)=-8$, the symplectic-sum formulas give
\[
e(Z_V)=4+9-2(-8)=29,
\qquad
\sigma(Z_V)=0-1=-1.
\]
Therefore
\[
\chi_h(Z_V)=\frac{e(Z_V)+\sigma(Z_V)}4=7,
\qquad
c_1^2(Z_V)=2e(Z_V)+3\sigma(Z_V)=55.
\]

The minimality argument is the same as in Theorem~\ref{thm:minimal}.  If an exceptional sphere in $X\#\overline{\CP}^{\,2}$ is blown down to $X$, its image is nullhomotopic because $X$ is aspherical.  Its class is therefore $\pm E$, and it meets $\Sigma_V$ once.  Thus the negative block contains no exceptional sphere disjoint from the summing surface.  The positive block is minimal by \cite[Theorem~6]{ASY}.  The ruled exception in Usher's theorem does not occur.  Indeed, the fundamental group of the negative block is the fundamental group of a closed aspherical four-manifold and has cohomological dimension four, whereas the fundamental group of a ruled surface has cohomological dimension at most two.  The positive block has signature $-1$, while an oriented $S^2$-bundle over a surface has signature zero.  Usher's theorem now implies that $Z_V$ is minimal.

The assertion about $H_1$ follows by abelianization.  Since $b_1(Z_V)=b_3(Z_V)=0$ rationally and $e(Z_V)=29$, one has $b_2(Z_V)=27$.  Poincar\'e duality and the universal coefficient sequence identify the torsion subgroup of $H_2(Z_V;\Z)$ with
\[
\operatorname{Ext}(H_1(Z_V;\Z),\Z)
\cong(\Z/2)^2.
\]
This proves the stated integral homology groups.
\end{proof}

\begin{corollary}\label{cor:klein-cover}
For a symplectically admissible gluing compatible with the dual surfaces, the universal fourfold cover of $Z_V$ is an exotic symplectic
\[
55\CP^2\#59\overline{\CP}^{\,2}.
\]
\end{corollary}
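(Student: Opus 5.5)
The plan is to copy the proof of Proposition~\ref{prop:w2} and Corollary~\ref{cor:exotic-cover}, with $\Pi/N\cong(\Z/2)^2$ in place of $\Z/2$.

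\textbf{Characteristic numbers of the cover.} Let $\widetilde Z_V\to Z_V$ be the universal cover. By Theorem~\ref{thm:klein-four} it is a fourfold unbranched cover. Euler characteristic and signature are multiplicative, so
\[
e(\widetilde Z_V)=116,\qquad \sigma(\widetilde Z_V)=-4.
\]
Since $\widetilde Z_V$ is simply connected, $b_1=b_3=0$ and $b_2=114$. The signature then gives $b_2^+=55$ and $b_2^-=59$.

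\textbf{Odd intersection form.} Next I show the form is odd, using the compatible gluing. Lemma~\ref{lem:framing-sum} joins the punctured exceptional sphere $E$ to the punctured parallel surface $P\subset X_{2,3}$ from Lemma~\ref{lem:positive-input}. This gives a genus-two surface $S\subset Z_V$ with
\[
S^2=E^2+P^2=-1.
\]
The key point is that $\pi_1(S)\to\pi_1(Z_V)$ is trivial. The loops of $S$ come from $P\setminus\Int D^2$, which lies in the positive complement. The proof of Theorem~\ref{thm:klein-four} shows the whole image of that complement group dies in $\pi_1(Z_V)$: it is normally generated by its meridian, and the meridian is identified with the meridian of $\Sigma_V$, which is killed by the dual sphere. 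Hence $S$ lifts: its preimage in $\widetilde Z_V$ is four disjoint diffeomorphic copies, each of square $-1$. So $\widetilde Z_V$ is non-spin and its intersection form is odd.

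\textbf{Homeomorphism type.} The form is unimodular (closed simply connected manifold), indefinite, and odd. By the classification of such forms it is
\[
55\langle1\rangle\oplus59\langle-1\rangle.
\]
The Kirby--Siebenmann invariant vanishes because $\widetilde Z_V$ is smooth. Freedman's theorem \cite{Freedman} then gives a homeomorphism to $55\CP^2\#59\overline{\CP}^{\,2}$.

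\textbf{Exoticness.} The symplectic form on $Z_V$ pulls back to $\widetilde Z_V$. Since $b_2^+(\widetilde Z_V)=55>1$, Taubes's theorem \cite{Taubes} gives a nonzero Seiberg--Witten invariant. The standard $55\CP^2\#59\overline{\CP}^{\,2}$ splits as a connected sum of two pieces with $b_2^+>0$, so its invariants vanish \cite{Witten}. The two smooth structures are therefore distinct.

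\textbf{Expected obstacle.} The hardest step is justifying the triviality of $\pi_1(S)\to\pi_1(Z_V)$ and the framing additivity for $Z_V$. This means checking that the compatibility definition, the existence argument for compatible admissible gluings, and Lemma~\ref{lem:framing-sum} all carry over verbatim when the negative block is $X\#\overline{\CP}^{\,2}$ with $\Sigma_V$ in place of $\Sigma_-$. The normal Euler numbers are again $-2$ and $+2$, and $E$ meets $\Sigma_V$ once, so I expect no change.
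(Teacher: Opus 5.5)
Your proposal is correct and is essentially the paper's own proof. The paper glues $E\setminus\Int D^2$ to $P\setminus\Int D^2$ to get a square $-1$ surface $S$, notes that $\pi_1(S)\to\pi_1(Z_V)$ is trivial because it factors through the positive complement, lifts $S$ to four disjoint square $-1$ copies, and then concludes via $e=116$, $\sigma=-4$, Freedman's theorem, and Taubes versus the connected-sum vanishing theorem. The carry-over you flag as the main obstacle, namely compatibility and Lemma~\ref{lem:framing-sum} with $\Sigma_V$ in place of $\Sigma_-$, is exactly what the paper also takes for granted by saying ``as in Proposition~\ref{prop:w2}''.
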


\begin{proof}
Join the punctured exceptional sphere on the negative side to the punctured genus-two surface $P$ on the positive side, as in Proposition~\ref{prop:w2}.  This gives a square $-1$ surface $S\subset Z_V$.  Its fundamental group maps trivially to $\pi_1(Z_V)$ because its map factors through the positive complement, whose image in the sum is trivial.  The inverse image of $S$ in the universal cover therefore consists of four disjoint copies, each of square $-1$.  Hence the universal cover is non-spin.

The universal cover is simply connected and has
\[
e=4e(Z_V)=116,
\qquad
\sigma=4\sigma(Z_V)=-4.
\]
Thus
\[
b_2=114,\qquad b_2^+=55,\qquad b_2^-=59.
\]
Its odd indefinite unimodular intersection form is
\[
55\langle1\rangle\oplus59\langle-1\rangle.
\]
Freedman's theorem gives the asserted homeomorphism.  The symplectic form on $Z_V$ pulls back to the universal cover, and Taubes's theorem gives a nonzero Seiberg--Witten invariant since $b_2^+=55>1$.  The standard connected sum has vanishing Seiberg--Witten invariants, so the two smooth structures are not diffeomorphic.
\end{proof}

\section{Exact computer verification}

The finite-index calculation uses exact Todd--Coxeter enumeration in finitely presented groups.

The exact computation files, recorded transcripts, software requirements,
and SHA-256 checksums are supplied with the paper in a fixed ancillary
archive:
\begin{center}
\small\path{CS_Torus_Surgery_Computations_v26.zip}.
\end{center}
An identical working copy is maintained at
\begin{center}
\url{https://github.com/akhmedov-coder/cartwright-steger-torus-surgery}.
\end{center}

The computation certifies three group-theoretic statements used in the
geometric argument.  It first verifies the index of the Cartwright--Steger
lattice $\Pi$ in the ambient CKY group $\Gamma$.  It then computes the index
of the subgroup $B$ generated by the normalization words and the node-joining
word, proving that $B$ is the mod-two kernel $N$.  Finally, it checks in each
of the three intermediate subgroups $\Pi_2$ that the normal closure of $B$
has index two.  The geometric identification of the entered word subgroup
with the image of the nodal curve is supplied by Lemma~\ref{lem:based-node};
the computer calculation is used only for these exact finite-index and
normal-closure assertions.

The ambient lattice is entered as
\[
\Gamma=\langle u,v,b\mid u^3,v^4,b^3,
(uv)^2(vu)^{-2},[v,b],(buv)^3,(buvu)^2v\rangle.
\]
Put $j=(uv)^2$ and define $a_1,a_2,a_3$ by the CKY words displayed in the ancillary files.  The subgroup
\[
\Pi=\langle a_1,a_2,a_3\rangle<\Gamma
\]
has index $864$.  This index is computed by the GAP script and reproduces the
CKY index, checking the transcription of the ambient presentation and the
three subgroup generators.

Next, the eight normalization words $p_1,\ldots,p_8$ displayed after
\cite[Proposition~2.4]{CKY} are entered as words in $a_1,a_2,a_3$ and $j$.
The two based branch paths from \cite[Section~2.3]{ASY} are
\[
r_1=a_3^3a_1a_2^{-1},\qquad
r_2=a_2a_1^{-2}a_3^{-1}a_1a_3^{-1}a_1^{-1}a_2^{-2},
\]
and the node-joining word is $h=r_1r_2^{-1}$.  Thus the subgroup supplied to the second enumeration is
\[
B=\langle p_1,\ldots,p_8,h\rangle<\Gamma.
\]

The ancillary file \texttt{CS\_curve\_subgroup\_index\_v5.py} uses SymPy 1.14.0.  It can be run by either of the commands
\begin{verbatim}
python3 CS_curve_subgroup_index_v5.py coset_table_based
python3 CS_curve_subgroup_index_v5.py relator_based
\end{verbatim}
The two strategies of the same SymPy Todd--Coxeter implementation construct
the same number of right cosets and terminate with
\begin{verbatim}
index_G_B 3456
index_Pi_B_if_contained 4 divisible True
\end{verbatim}
Consequently $[\Gamma:B]=3456$.  Together with the GAP calculation
$[\Gamma:\Pi]=864$, the index formula gives $[\Pi:B]=4$ once
$B\subset\Pi$ has been verified.

That containment is checked separately in GAP 4.12.1.  The command
\begin{verbatim}
iso := IsomorphismFpGroupByGenerators(H,[a1,a2,a3]);
curveWords := List([p1,p2,p3,p4,p5,p6,p7,p8,h],
                   x -> Image(iso,x));
\end{verbatim}
terminates for every listed word.  Here the domain of \texttt{iso} is
$H=\Pi<\Gamma$; thus each image is defined only after GAP has expressed the
corresponding word in the subgroup.  The words checked are
\[
p_1,\ldots,p_8,h.
\]
The GAP script also forms
\[
K=\langle\!\langle B\rangle\!\rangle_{\Pi}.
\]
It returns
\begin{verbatim}
[Pi:normal closure(B)] = 4
Size(Pi/normal closure(B)) = 4
IsAbelian = true
Abelian invariants = [ 2, 2 ]
\end{verbatim}
Thus $\Pi/K\cong(\Z/2)^2$.  Proposition~\ref{prop:ABN} gives
$B\subset N$ and hence $K\subset N$.  Since
$[\Pi:B]=[\Pi:K]=[\Pi:N]=4$, one obtains $B=K=N$.

Finally, the GAP script constructs the kernels $\Pi_2$ of the three nonzero
characters $\Pi/N\to\Z/2$.  For each character it verifies that all nine
generators of $B$ lie in $\Pi_2$ and that
\[
[\Pi_2:\langle\!\langle B\rangle\!\rangle_{\Pi_2}]=2.
\]
This gives the quotient used in Theorem~\ref{thm:group}.  In summary, GAP
computes $[\Gamma:\Pi]$ and the normal-closure quotients, while SymPy computes
$[\Gamma:B]$; Lemma~\ref{lem:based-node} identifies the entered word subgroup
with the geometric nodal-curve subgroup.

The ancillary files contain the presentations, words, commands, completion
counts, and recorded output for the SymPy and GAP calculations.  Their
SHA-256 hashes are
\begin{verbatim}
CS_curve_subgroup_index_v5.py:
  3aeb52b1a5833974a81687cc0e72d01ef376b8aaf474588e484ae032a1cc9402
CS_curve_subgroup_index_v5.transcript.txt:
  a62864706f9a8aad2373733826da3a28fa7c131d681fbc725d1a74e293bb3c07
CS_curve_subgroup_index_v5_relator.transcript.txt:
  c1b0fd4e22804f5bdf6c18c3cf057483afda5d6d180ae300caa78f6f48f91bc1
CS_curve_normal_closure_certificate_v6.g:
  af4c1a5ffd85d4895b45fa414a3958b6526be603c97b43cfbfcd1b863a48f0c3
CS_curve_normal_closure_certificate_v6.transcript.txt:
  f40924952486ddfd158a63e1cb6a31aa8afa01d4e0f784f3a199aa30cf47477d
\end{verbatim}

\bigskip
\begin{center}
 {\bf AI Use Disclosure} 
 \end{center}
 
 \medskip
 LLM tools were used for assistance for checking grammar and language editing. 
 All mathematical ideas, arguments, and results in this paper are due to the authors, who take full responsibility for the content.

\end{document}